\newcommand{\beq}{\begin{equation}}
\newcommand{\beqnt}{\begin{equation}\nonumber}
\newcommand{\eeq}{\end{equation}}
\newcommand{\fref}[1]{{\rm(\ref{#1})}}                      %%%%%%%%%% formula ref      %%%%%
\newcommand{\PC}{{\cal P}}%%%%%%%%%   допустимые значения управления
\newcommand{\QC}{{\cal Q}}%%%%%%%%%   допустимые значения помехи
\newcommand{\FC}{{\cal F}}%%%%%%%%%   правая часть дифф включения
\newcommand{\UA}{{\mathbb U}}%%%%%%%%%  стратегия с полной памятью
\newcommand{\UB}{{\mathbf U}}%%%%%%% всякие множества управлений
\newcommand{\VB}{{\mathbf V}}%%%%%%% всякие множества помехи
\newcommand{\Upr}{{\cal U}}%%%%%%    множетво программных управлений - реализаций управлений
\newcommand{\Vpr}{{\cal V}}%%%%%%    множетво программных помех - реализаций помех
\newcommand{\Uqs}{{\mathbf Q}}%%%%%%    множетво сосредоточенных однозначных квазистратегий управления
\newcommand{\Uxu}{{\mathbf S}}%%%%%%    множетво стратегий управления с полной пам. о движении и управлении
\newcommand{\Part}{\Delta}%%    разбиение
\newcommand{\Partk}{{\Part_k}}%%   k-е разбиение
\newcommand{\Pparset}{{\Part_T}}%%    множество всех разбиений при всех начальных моментах
\newcommand{\nPart}{n_{\Part}}%%   мощность разбиения
\newcommand{\nPartk}{n_{\Partk}}%%   мощность k-го разбиения
\newcommand{\cPart}{(\tau_i)_{i\in\nint{0}{\nPart}}}%%   кортеж разбиения
\newcommand{\cPartk}{(\tau_{ki})_{i\in\nint{0}{\nPart}}}%%   кортеж k-го разбиения
\newcommand{\epar}{\varepsilon}%%%%%%  индекс в eps - оптимальной стратегии
\newcommand{\Ue}{{\UA_\epar}}%%%%%%  обозначение  eps - оптимальной стратегии
\newcommand{\Uepar}{{\UB^{\Part}_\epar}}       %%%%%%  обратная связь на разбиении для  eps-стратегии
\newcommand{\Uepark}{{\UB^{\Partk}_\epar}}       %%%%%%  обратная связь на k-ом разбиении для  eps-стратегии
\newcommand{\Uep}[1]{{\UB^{\Part}_{\epar#1}}}   %%%%%%  #1-й элемент обратной eps-стратегии 
\newcommand{\Uepk}[1]{{\UB^{\Partk}_{\epar#1}}} %%%%%%  #1-й элемент обратной связи eps-стратегии для k-го разбиения
\newcommand{\RA}{{\mathbb R}}%%%%%%%%%  действительные числа
\newcommand{\NA}{{\mathbb N}}%%%%%%%%%  натуральные числа
\newcommand{\mfv}{\mu_v}%%%%%%%%%  модуль непрерывности правой части по 4 аргументу
\newcommand{\mfu}{\mu_u}%%%%%%%%%  модуль непрерывности правой части по 4 аргументу
\newcommand{\mes}{\lambda}%%%%%%%%%  мера множества
\newcommand{\Xc}{{\cal X_\textsc{c}}}                %%%%%%%%%%% пучек констр. движение при компактных множествах помех
\newcommand{\Xs}{{\cal X}}                           %%%%%%%%%%% пучек констр. движение при произв. помехах
\newcommand{\Xpr}{{\cal X_\textsc{p}}}               %%%%%%%%%%% пучек констр. движение при программных помехах
\newcommand{\Gs}{{\Gamma}}                           %%%%%%%%%%% оптимальная гарантия при произв. помехах
\newcommand{\Gc}{{\Gamma_\textsc{c}}}                         %%%%%%%%%%% оптимальная гарантия при L2 комп. помехах
\newcommand{\Gp}{{\Gamma_\textsc{p}}}                         %%%%%%%%%%% оптимальная гарантия при прогр. помехах
\newcommand{\Gq}{{\Gamma_\textsc{q}}}                         %%%%%%%%%%% оптимальная в классе квазистрат.
\newcommand{\Wq}{{\cal W}}                         %%%%%%%%%%% движения оптимальных квазистрат.
\newcommand{\qndx}{\gamma}                          %%%%%%%%%%  показатель качества
\newcommand\argmin{\operatornamewithlimits{\mathrm{argmin}}}
\newcommand\diam[1]{\ensuremath{\operatorname{{\mathrm D}(#1)}}}
\newcommand\diamin[1]{\operatorname{{\mathrm d}(#1)}}
\newcommand*\dist[1]{\operatorname{\mathbf{d}^H_{#1}}}%%%%%%%%%% полу-расстояние Хаусдорфа от второго до первого множествами
\newcommand\close{\operatorname{\mathbf cl}}
\newcommand\comp[2]{{\mathbf{comp}_{#2}{(#1)}}}
\newcommand\cVpr{\comp{\Vpr}{L_2}}
\newcommand{\SP}[2]{{\left\langle{#1},{#2}\right\rangle}}
\newcommand{\nint}[2]{{{#1}..{#2}}}
\newcommand{\mydef}{=}%% пред.вар.:{\operatorname{\triangleq}}%%%%%%%%%%%% знак <<равно по определению>>
\newcommand\mysim[1]{\mathrel{\mathop{\sim}\limits_{#1}}}%%%%%%%%%%%%%% знак отношения экв. параметризованный
\newcommand{\myinf}{\operatornamewithlimits{\inf\vphantom{\sup}}}%%%%%%%%%%%%%%%%%%%%% знак inf выравненый <<под sup>>
\newcommand{\mymin}{\operatornamewithlimits{\min\vphantom{\sup}}}%%%%%%%%%%%%%%%%%%%%% знак min выравненый <<под sup>>
\newcommand{\mymax}{\operatornamewithlimits{\max\vphantom{\sup}}}%%%%%%%%%%%%%%%%%%%%% знак max выравненый <<под sup>>
\newcommand{\mylim}{\operatornamewithlimits{\lim\vphantom{\sup}}}%%%%%%%%%%%%%%%%%%%%% знак lim выравненый <<под sup>>
\newcommand\aee{a.a.\ }							   %%%%%%%%%%%%%%%%% almost all
\newtheorem{theorem}{Theorem}
\newtheorem{lemma}{Lemma}
\theoremstyle{remark}
\newtheorem{remark}{Remark}
\newtheorem{example}{Example}
\title{On non-improvability of full--memory strategies in problems of optimization of the guaranteed result}
\author{Dmitrii Serkov \thanks{Krasovskii Institute of Mathematics and Mechanics Ural Branch of Russian Academy of Sciences; S.Kovalevskoy, 16, Yekaterinburg, 620990, Russia\endgraf
Yeltsyn Ural Federal University; Mira, 19, Yekaterinburg, 620003, Russia.}\\ e-mail: serkov@imm.uran.ru}
\begin{document}

\maketitle

\begin{abstract}
The paper addresses the problem of optimization of a guaranteed (worst case) result for a control system driven by a controlling side in presence of a dynamical disturbance. The disturbances as functions of time are subject to functional constraints belonging to a given family of constraints. The latter family is known to the controlling side that does not observe the disturbance and uses full-memory strategies to form the control actions. The study is focused on the case where disturbance varies in open-loop disturbances chosen in advance and the case where the disturbances are restricted to a $L_2$--compact set fixed in advance but unknown to the controlling side. In these cases it is shown that the optimal guaranteed result is non-improvable in the sense that it coincides with that obtained in the class of quasi-strategies -- nonantisipatory transformations of disturbances into controls. An $\varepsilon$--optimal full-memory strategy is constructed. An illustrative nonlinear example is given.
\end{abstract}

\paragraph{Keywords:}
optimal guaranteed result, full-memory strategies, functionally constrained disturbances, quasi-strategies.

\paragraph{MSC2010:} 93C15, 49N30, 49N35.

\section{Introduction}

This work is related to the theory of guaranteeing positional control (see \cite{KraSub88e,SubChe81e}). The theory focuses on assessment of the optimal guaranteed result --- the minimax of a cost functional --- for the controlling side that is faced with the disturbance in the process of steering a dynamical control system. Here, we study properties of the optimal guaranteed result and describe an optimal strategy for the controlling side in the case where the dynamical disturbance, non-observable by the control, is subject to a functional constraint belonging to a given family of constraints.
In particular, the control problem under the action of a disturbance, that is known to be independent both of the system's state and of control actions can be considered as a problem with  functional constraint on the disturbances. The examples of such situation are numerous: a physical object under the influence of natural forces (the wind during the aircraft landing); the mass behavior relative to an individual (the economic forces relative to some enterprise), etc.

Control problems with additional functional constraints imposed on the input dynamical disturbances have been studied in various formalizations. The simplest functional constraint restricts the disturbances to the open-loop ones. In \cite{KrasIZVD70e,Kra71DANe,KrasUDS85e} the maximin open-loop constructions (including the stochastic ones) use open-loop disturbances to find the optimal guaranteed result and optimal closed-loop strategies in control problems with non-constrained disturbances. In \cite{BarSub70e,BarSub71e} properties of linear control systems in the cases of open-loop disturbances, disturbances generated by continuous feedbacks, and disturbances formed by upper semicontinuous set-valued closed-loop strategies were compared. In \cite{Kry91} assuming that the disturbances are restricted to an unknown $L_2$-compact set, it was shown that the optimal guaranteed result achieved in the class of the full-memory closed-loop control strategies equals that achieved by the 'fully informed controller' allowed to control the system using quasi-strategies --- nonantisipatory open-loop control responses to disturbance realizations \cite{SubChe81e}; in this sense the full-memory closed-loop strategies are uninprovable. In \cite{Ser_DAN2013et} considering the problem setting proposed in \cite{Kry91} in the case of a continuous cost functional, new unimprovability conditions for full-memory control strategies were given and an optimal full-memory control strategy allowing numerical implementation was constructed.

In this work, for the case of a continuous cost functional we show that, firstly, the guaranteed control problem with open-loop disturbances is equivalent to that with the disturbances restricted to $L_2$-compact sets, and, secondly, the optimal guaranteed results achieved by the controlling side in the class of full--memory control strategies under these two types of constraints on the disturbances are equal to that achieved in the class of quasi--strategies. In showing the results, we use elements of theory of robust dynamical inversion of control systems (\cite{KryOsi83e,KryOsi95}). The idea is that on small time intervals we replace the useful control by a series of test control actions.  By observation the system's responses to these test control actions, we reconstruct a disturbance that is approximately equivalent to the actual one, which allows the controlling side to operate with the efficiency of quasi-strategies.

This article is the English version of the Russian publication \cite{Ser_TRIMM2014e}.

\section{Definitions}

Consider a control system
\begin{equation}
\label{sys}
\begin{cases}
\dot x(\tau)=f(\tau,x(\tau),u(\tau),v(\tau)),&\tau\in T\mydef[t_0,\vartheta]\subset\RA,\\
x(t_0)=z_0\in G_0\subset\RA^n,
\end{cases}
\end{equation}
$$%%\label{rest_u_v}
u(\tau)\in\PC\subset\RA^p,\ v(\tau)\in\QC\subset\RA^q,\quad\tau\in T.
$$
%Here symbol $\mydef$ means ``equal by definition'';
Here
$\PC$, $\QC$, and $G_0$ are compact sets;
and
$f(\cdot): T \times \RA^n \times \PC \times \QC \mapsto \RA^n $ is continuous,
locally Lipschitz in the second argument and such that for some $ K\ge0$ the inequality
$$
\sup_{(\tau,u,v)\in T\times\PC\times\QC}\|f(\tau,x,u,v)\|\le K(1+\|x\|).
$$
holds for all $x\in\RA^n$ ($\|\cdot\|$ denotes the norm in an Euclidian space).
{\em Controls} $u(\cdot): T \mapsto \PC $ and {\em disturbances} $v(\cdot): T \mapsto \QC $ are supposed to be  Lebesgue measurable. Denote by $\Upr$ the set of all controls and by $\Vpr$ the set of all disturbances.

For arbitrary $(t_*,x_*)\in T\times\RA^n$, $u(\cdot)\in\Upr$, $v(\cdot)\in\Vpr$ we denote
by $x(\cdot,t_*,z_*,u(\cdot),v(\cdot))$
the (unique) Carath\'eodory solution of (\ref{sys}) (see \cite[II.4]{Wargae}) defined on $[t_*,\vartheta]$ and satisfying the initial condition $x(t_*)=x_*$.
We fix a compact set $G\subset T\times\RA^n$
such that
$ (t,x(t,t_0,z_0,u(\cdot),v(\cdot))) \in G $
for all
$ t \in T $,
$ z_0 \in G_0 $
$u(\cdot)\in\Upr$,
$v(\cdot)\in\Vpr$,
and denote
\beq\label{kappa_G}
\varkappa\mydef\max_{\scriptsize(\tau,x)\in G\atop u\in\PC, v\in\QC}\|f(\tau,x,u,v)\|.
\eeq
%%%%%%

A set $\Part\mydef(\tau_i)_{i\in\nint{0}{\nPart}}$
where
$\tau_0=t_0$, $\tau_{i-1}<\tau_{i}$, $\tau_{\nPart}=\vartheta$
will be called a {\em partition} (of
interval $T$).
Denote by $\Pparset$ the set of all partitions.
For a partition $\Part\mydef(\tau_i)_{i\in\nint{0}{\nPart}}$ and
a $t\in T$ set
$$
i_t\mydef\max_{i\in\nint{0}{\nPart}\atop\tau_i\le t}i,\quad \diamin{\Part}\mydef\min_{i\in\nint{1}{(\nPart-1)}} \tau_i-\tau_{i-1},\quad \diam{\Part}\mydef\max_{i\in\nint{1}{\nPart}}\tau_i-\tau_{i-1}.
$$

Following \cite{Kry91}, define full--memory control strategies used by the controlling side.
For every
$ \tau_*, \tau^* \in T $
where
$ \tau^* > \tau_* $
denote
by
$ {\cal U}|_{[\tau_*, \tau^*)} $
the set of the restrictions of
all controls to
$ [\tau_*, \tau^*) $.
Given a partition  $\Part\mydef(\tau_i)_{i\in\nint{0}{\nPart}}$,
any family
$\UB^\Part\mydef(\UB^\Part_i(\cdot))_{i\in\nint{0}{(\nPart-1)}}$
where
$
\UB^\Part_i(\cdot): C([t_0, \tau_i], \RA^n) \mapsto {\cal U}|_{[\tau_i, \tau_{i+1})} $
$ (i\in\nint{0}{(\nPart-1)}) $
will be called a
\emph{full-memory feedback for partition $\Part$}.
Every family
$ \UA = (\UB^{\Part})_{\Part \in \Pparset} $
where
$ \UB^{\Part} $
is a
full-memory feedback for $\Part$
will be called a
{\em full-memory control strategy}.
We denote by
$ \Uxu $
the set of all full-memory control strategies.

Given
a $ z_0 \in G_0 $,
a partition
$\Part\mydef(\tau_i)_{i\in\nint{0}{\nPart}}$,
a full-memory feedback
$ \UB^\Part = (\UB^\Part_i (\cdot))_{i \in \nint{0}{(n_{\Part}-1)}} $
for
$\Part $
and
a disturbance $ v(\cdot) \in \Vpr $,
the
function
$ x(\cdot) = x(\cdot, t_0, z_0, u(\cdot), v(\cdot)) $
where
$ u(\cdot) \in \Upr $
is such that
$ u(t) = $
$ \UB^\Part_i(x(\cdot)|_{[t_0,\tau_i]})(t) $
for all
$ t \in [\tau_i, \tau_{i+1}) $
and all
$ i\in\nint{0}{(\nPart-1)} $
will be called
the (system's) {\em motion} originating at $ z_0 $ and corresponding to $\Part$,  $ \UB^\Part $ and $ v(\cdot) $;
we denote $ x(\cdot) $ and $ u(\cdot) $ by $ x(\cdot,z_{0},\UB^{\Part},v(\cdot)) $ and  $ u(\cdot,z_{0},\UB^{\Part},v(\cdot)) $, respectively.

For every
$ z_0 \in G_0 $,
every full-memory control strategy
$ \UA = (\UB^{\Part})_{\Part \in \Pparset} \in \Uxu $
and every nonempty set
of disturbances,
$\VB \subseteq \Vpr $,
we define the
{\em bundle of
motions}
originating from
$ z_0 $
and corresponding to
$ \UA $
and
$\VB $
to be the set 
$ X(z_0,\UA,\VB) $
of all
$ x (\cdot) \in C (T; \RA^n) $
with the following property:
there is a sequence
$ \{(z_ {0k}, v_k (\cdot), \Partk, \UB^{\Partk}) \}_{k=1}^{\infty} $
in
$ G_0 \times \VB \times \Pparset \times \UA $
such that
$\lim_{k \to\infty} z_{0k} = z_0$, $\lim_{k \to\infty} \diam{\Partk} = 0$
and
$ x(\cdot,z_{0k},\UB^{\Partk},v_k(\cdot)) \rightarrow x(\cdot) $
in
$ C(T;\RA^n) $.

In the above definition, $\VB$ is a functional constraint on the disturbances. Generally, we assume that the controlling side does not know $ \VB $ but knows a class of functional constraints $ \VB $ belongs to. The latter class gives the controlling side general information on the disturbance behavior but does not provide information on the exact one.

We will consider three cases of functional constraints on the disturbances. One case is the lack of constraints; in this exceptional case $ \VB = \Vpr $ is known to the controlling side. Another case assumes that the disturbance can vary within some $L_2$--compact set; in this case the controlling side considers the class of all $L_2$-compact $\VB\subset\Vpr $ as potential candidates for constraining the disturbances. The final case assumes that the disturbance is allowed to be open-loop only; in this case all one--element subsets of $\Vpr$ represent the potential constraints on the disturbances.

In the above cases,
for every
$ z_0 \in G_0 $
and every
full-memory control strategy
$ \UA \in \Uxu$,
we define the bundles of the
system's
{\em motions}
originating at
$ z_0 $
under
$ \UA $
subject to
{\em arbitrary disturbances},
{\em $ L_2 $-compactly constrained disturbances},
and
{\em open-loop disturbances}
as, respectively,
\begin{eqnarray*}
\Xs(z_0,\UA)&\mydef&X(z_0,\UA,\Vpr),\notag\\%\label{def_X*}
\Xc(z_0,\UA)&\mydef&\bigcup_{\VB\in\cVpr}X(z_0,\UA,\VB),\\%\label{def_Xc}
\Xpr(z_0,\UA)&\mydef&\bigcup_{v(\cdot)\in\Vpr}X(z_0,\UA,\{v(\cdot)\}) ;\notag%\label{def_Xpr}
\end{eqnarray*}
here $\cVpr$ denotes the family of all $L_2(T; \RA^q)$-compact subsets of $ \Vpr $.

\begin{remark}
The definition of $\Xs(z_0, \UA)$ is a straightforward generalization of the definition of the
set of constructive motions
generated by a closed-loop control strategy (see \cite{KraSub88e}).
The definition of $\Xc(z_0, \UA)$ follows \cite{Kry91}.
\end{remark}

According to the definitions, $\Xpr(z_0, \UA) \subseteq\Xc(z_0, \UA) \subseteq \Xs (z_0, \UA)$ holds for all $z_0\in G_0$ and $\UA\in\Uxu$.

\begin{remark}
In \cite {Ser_UDSU_09e} it was shown that generally $\Xpr(z_0, \UA)\neq\Xs (z_0, \UA)$.
A similar reasoning can lead to a statement that generally
$\Xc(z_0, \UA)\neq\Xs (z_0, \UA)$.
\end{remark}

Let the controlling side evaluate the quality of the system's motions by a continuous {\em cost functional} $ \qndx(\cdot): C (T; \RA^n)\mapsto\RA $. The controlling side seeks then to chose a full-memory control strategy that guarantees the minimum value for the supremum of $ \qndx(x(\cdot)) $ over the system's motions $ x(\cdot) $ corresponding to the chosen control strategy and all disturbances that are allowed within the given constraints.

Following \cite {KraSub88e}, and \cite {SubChe81e}, we call
$$
\Gs (z_0, \UA) \mydef \sup_ {x (\cdot) \in \Xs (z_0, \UA)} \qndx (x(\cdot))
$$
the {\em guaranteed result} at $z_0 \in G_0$ for a full-memory control strategy $\UA$ against \emph{arbitrary disturbances}; and we call
$$
\Gs(z_0) \mydef \inf_{\UA \in \Uxu} \Gs (z_0, \UA)
$$
the {\em optimal guaranteed result} at $z_0 \in G_0$ in the class of the full-memory control strategies $\Uxu$, against \emph{arbitrary disturbances}.
Similarly, we call
$$
\Gc (z_0, \UA) \mydef \sup_ {x (\cdot) \in \Xc (z_0, \UA)} \qndx (x(\cdot))
$$
the {\em guaranteed result} at $z_0 \in G_0$ for a full-memory control strategy $\UA $ against $L_2$--\emph{compactly constrained disturbances} and we call
$$
\Gc(z_0) \mydef \inf_{\UA \in \Uxu} \Gc(z_0, \UA)
$$
the {\em optimal guaranteed result} at $ z_0 \in G_0 $ in $\Uxu$ against $L_2$-\emph{compactly constrained disturbances}. 

Finally, we call
$$
\Gp(z_0, \UA) \mydef \sup_ {x (\cdot) \in \Xpr(z_0, \UA)} \qndx (x(\cdot)).
$$
the {\em guaranteed result} at $ z_0 \in G_0 $ for a full-memory control strategy $\UA  $ against \emph{open-loop disturbances}; and we call
$$
\Gp(z_0) \mydef \inf_{\UA \in \Uxu} \Gp(z_0, \UA).
$$
the  {\em optimal guaranteed result} at $ z_0 \in G_0 $ in $\Uxu$ against \emph{open-loop disturbances}.

Along with the full-memory control strategies, we introduce, after \cite{SubChe81e}, control quasi-strategies --- nonantisipatory transformations of disturbances into controls. The controlling side uses quasi--strategies if he/she is fully informed about the current histories and current values of the disturbance. A {\em control quasi-strategy} is a mapping $\alpha(\cdot): {\Vpr}\mapsto {\Upr}$ satisfying the following {\em non-anticipativity condition}: $\alpha(v(\cdot))|_{[t_0,\tau]}= \alpha (v '(\cdot))|_{[t_0,\tau]}$ for any $ \tau\in T $, $v (\cdot),v '(\cdot) \in \Vpr$ such that $v(\cdot)|_{[t_0,\tau]} = v'(\cdot)|_{[t_0,\tau]}$. We denote by $\Uqs$ the set of all control quasi-strategies. For every $ z_0 \in G_0 $ and every control quasi-strategy $ \alpha(\cdot)$,  we call
$$
\Xs(z_0,\alpha(\cdot))\mydef\{x (\cdot, t_0, z_0,\alpha(v(\cdot)),v(\cdot))\mid v(\cdot)\in{\Vpr}\}
$$
the bundle of {\em motions} originating at $ z_0 $ under $\alpha(\cdot)$. For every $ z_0 \in G_0 $ the value 
$$
\Gq(z_0, \alpha(\cdot))=\sup_{x(\cdot)\in\Xs(z_0,\alpha(\cdot))}\qndx(x(\cdot))
$$
is called the \emph{guaranteed result} at $z_0 $  for a control-quasi-strategy $ \alpha(\cdot) $ against {\em arbitrary disturbances}, and
$$
\Gq(z_0) \mydef\myinf_{\alpha(\cdot)\in\Uqs}\Gq(z_0, \alpha(\cdot)))
$$
is called the \emph{optimal guaranteed result} at $z_0 $ \emph{in the class of the control quasi-strategies}, $\Uqs$, against {\em arbitrary disturbances}.

\begin{remark}
Similar to how it was done above, one can also define the optimal guaranteed results at $ z_0 \in G_0 $ in $\Uqs$ against $L_2$--compactly constrained disturbances and against open-loop disturbances. However, these definitions would lead to the same value; the control quasi-strategies are insensitive to the $ L_2 $-compact and open-loop constraints on the disturbances.
\end{remark}

The next statement is obvious.

\begin{theorem}% \ label {theorem-gam}
For every $z_0\in G_0 $
\beq\label{neq_gam}
\Gq(z_0) \leq \Gp (z_0) \leq \Gc(z_0) \leq \Gs (z_0).
\eeq
\end{theorem}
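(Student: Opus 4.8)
The two right-hand inequalities are routine, and the plan is to dispose of them first. Fix $z_0\in G_0$ and an arbitrary full-memory strategy $\UA\in\Uxu$, and exploit the inclusions $\Xpr(z_0,\UA)\subseteq\Xc(z_0,\UA)\subseteq\Xs(z_0,\UA)$ recorded just above the statement. Since the supremum of the continuous functional $\qndx$ over a set cannot decrease when the set is enlarged, these inclusions give $\Gp(z_0,\UA)\le\Gc(z_0,\UA)\le\Gs(z_0,\UA)$. As this chain holds for every $\UA\in\Uxu$ and the infimum is monotone with respect to the pointwise order, passing to $\inf_{\UA\in\Uxu}$ yields $\Gp(z_0)\le\Gc(z_0)\le\Gs(z_0)$.

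For the remaining inequality $\Gq(z_0)\le\Gp(z_0)$ I would argue that quasi-strategies are at least as powerful as full-memory feedbacks facing open-loop disturbances: a quasi-strategy, seeing the whole non-anticipative realization of $v(\cdot)$, can internally reproduce the trajectory a given full-memory feedback would generate against that same open-loop disturbance. It suffices to show $\Gq(z_0)\le\Gp(z_0,\UA)$ for every fixed $\UA\in\Uxu$ and then take $\inf_{\UA}$. For a partition $\Part\in\Pparset$ set $\alpha_\Part(v(\cdot))\mydef u(\cdot,z_0,\UB^\Part,v(\cdot))$, the open-loop control produced by $\UB^\Part$ against $v(\cdot)$. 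A node-by-node induction shows that the state history on $[t_0,\tau_i]$, and hence the feedback response $\UB^\Part_i$ on $[\tau_i,\tau_{i+1})$, is determined by $v(\cdot)|_{[t_0,\tau_i]}$ alone; thus $\alpha_\Part\in\Uqs$ and $x(\cdot,t_0,z_0,\alpha_\Part(v(\cdot)),v(\cdot))=x(\cdot,z_0,\UB^\Part,v(\cdot))$.

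To compare with $\Gp(z_0,\UA)$ I would then pass to partitions $\Partk$ with $\diam{\Partk}\to0$. For a fixed open-loop disturbance $v(\cdot)$ the motions $x(\cdot,z_0,\UB^{\Partk},v(\cdot))$ take values in the compact set $G$ and are equi-Lipschitz with constant $\varkappa$, so by the Arzel\`a--Ascoli theorem they subconverge in $C(T;\RA^n)$ to a limit lying, by definition, in $X(z_0,\UA,\{v(\cdot)\})\subseteq\Xpr(z_0,\UA)$. The goal is to assemble these per-disturbance limits into a single quasi-strategy $\alpha$ with $x(\cdot,t_0,z_0,\alpha(v(\cdot)),v(\cdot))\in X(z_0,\UA,\{v(\cdot)\})$ for every $v(\cdot)$; such an $\alpha$ satisfies $\Gq(z_0,\alpha)\le\sup_{v(\cdot)\in\Vpr}\sup_{x\in X(z_0,\UA,\{v(\cdot)\})}\qndx(x)=\Gp(z_0,\UA)$, whence $\Gq(z_0)\le\Gp(z_0)$.

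The main obstacle is precisely this assembly step. The naive route of choosing, for each $k$, a near-worst disturbance $v_k(\cdot)$ and passing to the limit fails, because $v_k(\cdot)$ varies with $k$ and the limiting motion then lands only in $\Xs(z_0,\UA)$, yielding merely the weaker bound $\Gq(z_0)\le\Gs(z_0)$. To reach $\Xpr(z_0,\UA)$ one must fix the open-loop disturbance first and select a limiting motion in $X(z_0,\UA,\{v(\cdot)\})$ in a way that depends non-anticipatively on $v(\cdot)$; carrying out this non-anticipative selection through the limit over partitions is the delicate point that underlies the inequality, even though, at the level of information available to the controlling side, the conclusion $\Gq(z_0)\le\Gp(z_0)$ is intuitively clear.
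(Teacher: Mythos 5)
Your handling of the two right-hand inequalities is correct and is exactly what the paper (implicitly) relies on: the inclusions $\Xpr(z_0,\UA)\subseteq\Xc(z_0,\UA)\subseteq\Xs(z_0,\UA)$ (a singleton $\{v(\cdot)\}$ being $L_2$-compact), monotonicity of the supremum under enlargement of the set, and monotonicity of the infimum under the pointwise order. Note, however, that the paper gives no proof of this theorem at all --- it is announced as obvious --- so for the first inequality there is no argument of the paper's to compare yours against.

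Judged on its own terms, your proof of $\Gq(z_0)\le\Gp(z_0)$ is incomplete, as you yourself concede. What you actually establish is that for each \emph{fixed} partition $\Part$ the map $\alpha_\Part(v(\cdot))\mydef u(\cdot,z_0,\UB^\Part,v(\cdot))$ is a quasi-strategy reproducing the motion $x(\cdot,z_0,\UB^\Part,v(\cdot))$ (the inductive non-anticipativity check is fine), whence $\Gq(z_0)\le\Gq(z_0,\alpha_\Part)=\sup_{v(\cdot)\in\Vpr}\qndx(x(\cdot,z_0,\UB^\Part,v(\cdot)))$. On the other side, $\Gp(z_0,\UA)$ dominates $\limsup_{k\to\infty}\qndx(x(\cdot,z_0,\UB^{\Partk},v(\cdot)))$ only \emph{for each fixed} $v(\cdot)$, via Arzel\`a--Ascoli and continuity of $\qndx$. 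Bridging the two requires the interchange $\liminf_k\sup_{v}\le\sup_{v}\limsup_k$, which fails in general: the index beyond which $\qndx(x(\cdot,z_0,\UB^{\Partk},v(\cdot)))\le\Gp(z_0,\UA)+\eta$ depends on $v(\cdot)$, and since the feedbacks $\UB^\Part_i(\cdot)$ are arbitrary maps of the state history (no continuity is assumed), the family $v(\cdot)\mapsto x(\cdot,z_0,\UB^{\Partk},v(\cdot))$ carries no equicontinuity that would make this dependence uniform; choosing the partition adaptively as $\Part(v(\cdot))$ destroys non-anticipativity. So the ``assembly step'' you flag is not a deferrable technicality --- it is the entire content of the inequality, and the contradiction route you correctly rule out (extracting a worst-case $v_k(\cdot)$ for each $\Partk$ and passing to the limit) indeed yields only $\Gq(z_0)\le\Gs(z_0,\UA)$, since the limit motion lands in $\Xs(z_0,\UA)$ rather than in $\Xpr(z_0,\UA)$. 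A complete proof must supply either the missing uniformity in $v(\cdot)$ or a genuinely non-anticipative selection across partitions; neither is present in the proposal.
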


\begin{remark}
As follows from  \cite{KraSub88e}, and \cite{SubChe81e}, for every $ z_0\in G_0$ all the inequalities in (\ref{neq_gam})  turn into equalities if
\begin{equation}
\label{cond_SPSG}
\min_{u\in\PC}\max_{v\in\QC}\SP{s}{f(\tau,x,u,v)}=\max_{v\in\QC}\min_{u\in\PC}\SP{s}{f(\tau,x,u,v)}
\end{equation}
for all $(\tau,x)\in G$, $s\in\RA^n$.
In that case neither the $ L_2 $-compact, nor open-loop constraints on the disturbances change the
optimal guaranteed result.
\end{remark}

In this paper we do not assume (\ref{cond_SPSG})  to be satisfied 
for all $(\tau,x)\in G$, $s\in\RA^n$.
In such circumstances, some inequalities given in (\ref{neq_gam}) can be strict. 
Examples of the situations where the first and  last elements in the chain (\ref{neq_gam})
differ are well known (see \cite[Chapter VI, \S1]{SubChe81e}).
For the case where the cost functional $\qndx$  is uniformly $(L^1,\delta)$-continuous on the set of all motions of system (\ref{sys}) but is not continuous on $C(T, \RA^n)$, an example of the situation where the last inequality in (\ref{neq_gam}) is strict, was constructed in \cite{Kry91}
(where one can also find a definition of the uniform $(L^1,\delta)$-continuity).
For $\qndx$  continuous on $C(T, \RA^n)$ a similar example was given in \cite{Ser_UDSU_10e}.

Among the optimal guaranteed results 
(at a
$ z_0 \in G_0 $)
given in
 \fref{neq_gam} 
the smallest one is the optimal guaranteed result in the class of the control quasi-strategies. 
We address a question whether
the optimal guaranteed 
result 
(at 
$ z_0 $)
in the class of the full-memory control strategies
against either
open-loop disturbances,
or 
 $ L_2 $-compactly constrained disturbances
 coincides with that in the class of quasi-strategies.
 If the answer is positive,
 the class of the full-memory control strategies, $\Uxu$, is 
 {\em non-improvable} against 
 a corresponding type of functional constraints on the disturbances.
In that situation, the use of any information on the past and current values of the
 actual disturbance does not allow the controlling side to improve the value of the optimal guaranteed result at any 
$ z_0 \in G_0 $, provided  the disturbance subject to
the corresponding type of functional constraints.

In \cite{Kry91} it was shown that in the case of a uniformly $(L^1, \delta)$-continuous cost functional the one-to-one correspondence in the mapping $ v\mapsto f(t, x, u, v)$ for all $ (t,x,u) \in T \times \RA^n \times \PC $ is sufficient for the non-improvability of $\Uxu$ against $L_2$-compactly constrained disturbances. In \cite{Ser_DAN2013et} for the case of a cost functional continuous in $ C (T, \RA^n) $ a less restrictive sufficient condition for the non-improvability of $\Uxu$  against the $L_2$-compactly constrained disturbances and the corresponding optimal full-memory control strategy was constructed. Without that additional condition to be assumed, a question if the first or/and second inequality given in \fref {neq_gam} turns into an equality for a cost functional continuous in $ C (T, \RA^n) $ has so far remained open.
In this paper we give a positive answer to the question.
Namely, we show that
at every
$ z_0 \in G_0 $
the class of the full-memory control strategies, 
$\Uxu$,
is non-improvable
against both
$L_2$-compactly constrained and open-loop disturbances.

%%%%%%%%%%%%%%%%%%%%%%%%%%%%%%%%%%%%%%%%%
\section {Non-improvability of full-memory control strategies against
$L_2$-compactly constrained and open-loop disturbances}
%%%%%%%%%%%%%%%%%%%%%%%%%%%%%%%%%%%%%%%%%

In this section we construct a family $(\Ue)_ {\epar>0} $
of full-memory control strategies,
$\Ue = (\Uepar)_{\Delta \in \Delta_T} $
$ (\varepsilon > 0) $,
such that 
for a given
$ z_0 \in G_0 $
$$
\limsup_{\varepsilon \rightarrow 0} \Gc(z_0, \Ue) \leq \Gq(z_0). 
$$
Then, in view of
(\ref{neq_gam}),
we get
$$
\Gc(z_0) = \Gp(z_0) = \Gq(z_0), 
$$
which implies that the 
full-memory control strategies 
are non-improvable
at
$ z_0 $
against both
$L_2$-compactly constrained and open-loop disturbances.

The process of operation of the full-memory 
feedback
$\Uepar= (\Uep{i}(\cdot))_{i\in\nint{0}{(\nPart-1)}} $
for a partition
$\Part=\cPart$ includes on-line simulation of a motion $ y(\cdot) $
of an auxiliary copy of system (\ref{sys}),
which we call the $y$-{\em model},
on every interval
$ [\tau_i, \tau_{i+1}) $.
In the simulation process,
the controlling side implements the robust dynamical inversion approach (\cite{KryOsi83e,KryOsi95}). He/she identifies a 'surrogate' disturbance $ \bar{v}_i $ that mimics the affect of the actual disturbance on the system, and lets the 'surrogate' disturbance operate in the $y$-model. To identify the 'surrogate' disturbance $ \bar{v}_i $, in a small final part of the time interval $ [\tau_{i-1}, \tau_{i}) $ the controlling side implements a series of test control actions $ u^{\varepsilon}_1, \ldots, u^{\varepsilon}_{n_{\varepsilon}} $ and observes the system's reactions driven by the actual disturbance. In the major initial part of $ [\tau_i, \tau_{i+1}) $ the controlling side implements the useful control action $ u_i $ constructed as the optimal response to the 'surrogate' disturbance for the $ y $-model, whereas the latter is driven by the useful control action $ u_{i-1} $ and 'surrogate' disturbance $ \bar{v}_{i-1} $ formed previously.

The optimal response 
$ u_i $
is found
using Krasovskii's extremal shift principle
(\cite{KraSub88e});
$ u_i $
shifts the
$ y $-model
to a target set at the maximum speed.
The target set
is formed 
in advance and  
comprises 
the histories 
(up to time
$ \tau_i $)
of the uniform limits of the system's motions
corresponding to 
'approximately optimal' 
control quasi-strategies.
The above control process ensures that
the current histories of
both the system's and
$ y $-model's
motions
never abandon small neighborhoods
of the current target sets,
implying that at the final time,
$ \vartheta $,
the value of the cost functional
does not exceed 
$ \Gq(z_0) + \varphi(\varepsilon) $ for some $\varphi(\cdot)$ satisfying $\varphi(\epar)\mathop{\to}\limits_{\epar\to0}0$.
 
Now we turn to formal definitions.
In the construction of the target sets we use 
the system's motions corresponding to 'approximately optimal' control quasi-strategies.
We set
$$
\Uqs(z, \delta) =
\{ \alpha(\cdot) \in \Uqs : \Gq(z, \alpha(\cdot)) \leq \Gq(z) + \delta \}
\quad
(\delta > 0,\ z\in G_0),
$$
$$%\beqnt%\label{z_pr_qs}
\Wq(z)\mydef\bigcap_{\delta>0}\close
\bigcup_{\scriptsize \alpha(\cdot) \in \Uqs(z, \delta)}\Xs(z,\alpha(\cdot));
$$%\eeq
here $\close X$ denotes the closure of a $X\subset C(T; \RA^n)$ in $C(T; \RA^n)$.
For every $ \tau \in T $ the set of the restrictions of all the elements of $ \Wq(z) $ to $ [t_0, \tau] $, denoted by
$ \Wq(z)|_{[t_0,\tau]} $,
will be regarded as the
{\em target set} at time
$ \tau $.
For every
$ \tau \in T $
and every
$y(\cdot)\in C([t_0,\tau],\RA^n)$ we fix a projection  $w(\cdot | \tau, y(\cdot))$
of
$ y(\cdot) $
onto the target set
$ \Wq(z)|_{[t_0,\tau]} $;
thus,
\beq\label{pro_on_Wq}
w(\cdot |\tau, y(\cdot))\in\argmin_{w(\cdot)\in\Wq(y(t_0))|_{[t_0,\tau]}}\|w(\cdot)-y(\cdot)\|_{C}
\eeq
where
$ \| \cdot \|_C $
stands for the
norm in
$ C([t_0,\tau],\RA^n) $.

Fix an  $\epar\in(0,1)$. Fix an $\epar$-net $(u^\epar_j)_{j \in\nint{1}{n_\epar}}$ in $\PC$; thus, $\sup_{u\in\PC}\mymin_{j\in\nint{1}{n_\epar}}\|u-u^\epar_j\|\le\epar$. In the subsequent constructions the elements of $(u^\epar_j)_{j \in\nint{1}{n_\epar}}$ play the role of test control actions mentioned above.

Every $\Part\in\Pparset$ can be "thin out", by withdrawing some elements, to $\Part'\in\Pparset$,  so that the latter satisfy conditions
$\Part'\subseteq\Part$, $\diam{\Part^\prime}/\diamin{\Part^\prime}\le3$ and $\diam{\Part^\prime}\le3\diam{\Part}$.

Let $\Part\mydef\cPart$ be a partition of $T$. Without loss of generality we suppose, that $\diam{\Part}/\diamin{\Part}\le3$.
Denote 
\begin{gather}
\tau_i'\mydef\tau_i-\epar \diamin{\Part}, \quad i\in\nint{1}{(\nPart-1)},\label{diamin}\\
\tau'_{ij}\mydef\tau_{i}'+\frac{j(\tau_{i}-\tau_{i}')}{n_{\epar}},\quad j\in\nint{0}{n_{\epar}}, \quad i\in\nint{1}{(\nPart-1)}\label{dop_inst}
\end{gather}
(thanks to \fref{diamin} $\tau'_{ij}\in(\tau_{i-1},\tau_{i}]$). For every $x(\cdot)\in C(T;\RA^n)$ let
\beq\label{ddd}
d_{ij} (x(\cdot))\mydef\frac{x(\tau'_{ij})-x(\tau'_{i(j-1)})}{\tau'_{ij }-\tau'_{i(j-1)}},\quad j\in\nint{1}{n_{\epar}}, \quad i\in\nint{1}{(\nPart-1)}.
\eeq

Define a full-memory feedback $\Uepar= (\Uep{i}(\cdot))_{i\in\nint{0}{(\nPart-1)}} $ for $\Part$ inductively.
Fix some $u_*\in\PC$, $v_* \in \QC$. 
For every $ x_0 (\cdot) \in C([t_0,\tau_0],\RA^n)$
(recall that
$ \tau_0 = t_0 $) we set
\begin{gather}
\bar{v}_0\mydef v_*,\quad u_0\mydef u_*,\quad  y_0(\tau_0)=z_0,\label{u_0_bar_v_0}\\
\Uep{0}(x_0(\cdot))(t)\mydef
\begin{cases}
u_0,&t\in[\tau_0,\tau_1'),\\
u^\epar_j,&t\in[\tau'_{1(j-1)},\tau'_{1j}), \ j\in\nint{1}{n_\epar}.
\end{cases}\label{Ulp0}
\end{gather}
If for some $i\in\nint{1}{(\nPart-1)}$ elements 
$\bar v_{i-1}=\bar v_{i-1}(x_{i-1}(\cdot))\in\QC$,
$\Uep{(i-1)}(x_{i-1}(\cdot)) \in {\cal U}|_{[\tau_{i-1},\tau_{i}]} $
and
$y_{i-1}(\cdot)=y_{i-1}(\cdot,x_{i-1}(\cdot))\in C([t_0,\tau_{i-1}], \RA^n)$  
(a motion of the
$ y $-model on
$ [t_0, \tau_{i-1}] $)
are defined for all $x_{i-1}(\cdot)\in C([t_0, \tau_{i-1 }], \RA^n)$, then for every $x_{i}(\cdot)\in C([t_0,\tau_i], \RA^n)$ we define
$ y_i(\cdot) =y_{i}(\cdot,x_{i}(\cdot))\in C([t_0,\tau_{i}], \RA^n)$
as the extension of
$ y_{i-1}(\cdot) $
to
$ [t_0, \tau_{i}] $
such that
\begin{multline}
y_{i}(\tau)=y_{i-1}(\tau_{i-1},x_i(\cdot)|_{[t_0,\tau_{i-1}]})\\
+\int_{\tau_{i-1}}^{\tau} f(t,y_{i}(t),\Uep{i-1}(x_i(\cdot)|_{[t_0,\tau_{i-1}]})(\tau_{i-1}),\bar{v}_{i-1}(x_i(\cdot)|_{[t_0,\tau_{i-1}]}))dt,\\
\tau\in[\tau_{i-1},\tau_i],\label{mov_y_def}
\end{multline}
and set
\begin{gather}
\bar{v}_{i}\in\argmin_{v\in\QC}\mymax_{j\in\nint{1}{n_\epar}}\|d_{ij}(x_i(\cdot))-f(\tau_i,x_{i}(\tau_i),u_j^\epar,v)\|,
\label{bar_v_def}\\
u_{i}\in\argmin_{u \in\PC}\SP{y_{i}(\tau_i)-w(\tau_i\mid\tau_i, y_i(\cdot))}{f(\tau_i,y_{i}(\tau_i),u,\bar{v}_{i})},\label{U_bPart_def}\\
\Uep{i}(x_i(\cdot))(t)\mydef
\begin{cases}
u_i,&t\in[\tau_i,\tau_{i+1}'),\\
u^\epar_j,&t\in[\tau'_{(i+1)(j-1)},\tau'_{(i+1)j}), j\in\nint{1}{n_\epar}.
\end{cases}
\label{Ulpi}
\end{gather}
The full-memory feedback $\Uepar$ is defined for an arbitrary partition $\Part\in\Pparset$. 
Thus, the full-memory strategy $\Ue\mydef (\Uepar)_{\Part\in\Pparset}$ is defined.

%%%%%%%%%%%%%%%%%%%%%
\begin{theorem}\label{teo_Q_EQ_C}
For all $z_0\in G_0$ the following relations hold true:
\beq\label{Ue_approx_gamma_ps}
\limsup_{\epar\to0}\Gc(z_0,\Ue)\le\Gq(z_0),
\eeq
%%%
\beq\label{gamma_pr_eq_low_game_value}
\Gp(z_0) = \Gc(z_0) = \Gq(z_0).
\eeq
\end{theorem}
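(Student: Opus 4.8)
The entire statement rests on \fref{Ue_approx_gamma_ps}; I would first record how \fref{gamma_pr_eq_low_game_value} follows from it, so that all the real work is concentrated on the family $\Ue$. Since $\Gc(z_0)=\inf_{\UA\in\Uxu}\Gc(z_0,\UA)\le\Gc(z_0,\Ue)$ for every $\epar\in(0,1)$, passing to the upper limit and invoking \fref{Ue_approx_gamma_ps} gives $\Gc(z_0)\le\Gq(z_0)$. Together with the chain \fref{neq_gam}, i.e.\ $\Gq(z_0)\le\Gp(z_0)\le\Gc(z_0)$, this forces $\Gp(z_0)=\Gc(z_0)=\Gq(z_0)$, which is exactly \fref{gamma_pr_eq_low_game_value}.

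The goal is thus to show that for each fixed $\epar$ every $x(\cdot)\in\Xc(z_0,\Ue)$ obeys $\qndx(x(\cdot))\le\Gq(z_0)+\varphi(\epar)$ with $\varphi(\epar)\mathop{\to}\limits_{\epar\to0}0$. I would begin with the two properties of the target set that drive the whole argument. Nonemptiness of $\Wq(z_0)$ is clear; the crucial fact is that $w(\cdot)\in\Wq(z_0)$ implies $\qndx(w(\cdot))\le\Gq(z_0)$. Indeed, for every $\delta>0$ each motion in $\bigcup_{\alpha(\cdot)\in\Uqs(z_0,\delta)}\Xs(z_0,\alpha(\cdot))$ has cost at most $\Gq(z_0)+\delta$; by continuity of $\qndx$ this bound passes to the closure, and intersecting over $\delta$ yields $\qndx(w(\cdot))\le\Gq(z_0)$. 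It therefore suffices to drive the real motion, by the terminal time $\vartheta$, into a $\varphi(\epar)$-neighbourhood of $\Wq(z_0)$ in $C(T;\RA^n)$.

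The heart of the proof is a two-layer tracking estimate, carried out along a fixed partition $\Part$ and an arbitrary disturbance realization, after which one lets $\diam{\Part}\to0$ (the bundle $\Xc(z_0,\Ue)$ already consists of such limits). I would monitor two discrepancies: the distance $\eta_i\mydef\|y_i(\cdot)-w(\cdot\mid\tau_i,y_i(\cdot))\|_C$ from the $y$-model history to its projection \fref{pro_on_Wq} onto the target set, and the deviation $\xi_i\mydef\mymax_{t\le\tau_i}\|x(t)-y_i(t)\|$ of the real motion from the $y$-model. The quantity $\eta_i$ is handled by Krasovskii's extremal shift: the control $u_i$ from \fref{U_bPart_def} moves $y_i(\tau_i)$ toward its projection at maximal rate against the current surrogate $\bar v_i$, while the weak invariance of $\Wq(z_0)$ is essentially built in — $\Wq(z_0)$ is assembled from quasi-strategy motions, and a quasi-strategy, by its non-anticipativity, supplies for any disturbance program a control response keeping the motion in the set. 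The usual squared-distance comparison would then produce a recursion of the form $\eta_i^2\le\eta_{i-1}^2+C\,\diam{\Part}\,(\eta_{i-1}+\diam{\Part}+\beta_\epar)$, where $\beta_\epar$ is the velocity-field error of the surrogate addressed below; this makes $\mymax_i\eta_i$ of order $\sqrt{\diam{\Part}}+\beta_\epar$.

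The delicate step, where I expect the real effort to lie, is the robust inversion, precisely because the map $v\mapsto f(\tau,x,u,v)$ is \emph{not} assumed injective: the selection \fref{bar_v_def} cannot recover the true disturbance value, only its effect on the dynamics. The finite differences \fref{ddd} approximate the averaged velocities $f(\tau_i,x(\tau_i),u^\epar_j,v(\cdot))$ generated by the test controls, so the minimum in \fref{bar_v_def} is small — the true disturbance nearly attains it — whence $f(\tau_i,x(\tau_i),u^\epar_j,\bar v_i)$ is close to the true velocity at each net point $j$; since $(u^\epar_j)_{j\in\nint{1}{n_\epar}}$ is an $\epar$-net in $\PC$ and $f$ is uniformly continuous on the compact $G\times\PC\times\QC$, the match extends to all $u\in\PC$, giving $\beta_\epar=O(\epar)$ \emph{despite} $\bar v_i\ne v$. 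Fed through \fref{mov_y_def}, this would yield a Gronwall-type recursion for $\xi_i$. Making all of this uniform over the $L_2$-compact set $\VB$ — as the supremum defining $\Gc$ demands — is the genuine obstacle: one must control the oscillation of $v$ across the test windows of length $\epar\diamin{\Part}/n_\epar$ and the one-step lag between the window on which $\bar v_i$ is read off and the interval on which it drives the $y$-model. This is exactly where $L_2$-compactness is indispensable, entering through the Fr\'echet--Kolmogorov uniform mean-continuity $\sup_{v\in\VB}\|v(\cdot+h)-v(\cdot)\|_{L_2(T;\RA^q)}\mathop{\to}\limits_{h\to0}0$. Assembling the two recursions and accounting separately for the $\epar$-net error, for the control mismatch on the test windows (which cover only an $\epar$-fraction of $T$ and so contribute $O(\epar\varkappa)$), and for the disturbance lag, places the history of $x(\cdot)$ within $\varphi(\epar)+o_{\diam{\Part}\to0}(1)$ of $\Wq(z_0)|_{[t_0,\vartheta]}$. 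Continuity of $\qndx$ and the target-set bound then give $\qndx(x(\cdot))\le\Gq(z_0)+\varphi(\epar)$ for every limit motion; taking the supremum over $\Xc(z_0,\Ue)$ and then $\limsup_{\epar\to0}$ would yield \fref{Ue_approx_gamma_ps} and complete the proof.
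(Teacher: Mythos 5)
Your overall architecture matches the paper's: reduce \fref{gamma_pr_eq_low_game_value} to \fref{Ue_approx_gamma_ps} via \fref{neq_gam}; build the target sets $\Wq(z_0)$ from near-optimal quasi-strategies and use their $u$-stability together with $\max_{w(\cdot)\in\Wq(z_0)}\qndx(w(\cdot))=\Gq(z_0)$; track the target set with the $y$-model by extremal shift; and track the $y$-model with the real motion by Gronwall once the surrogate disturbance is shown to reproduce the velocities of the actual one. Your first paragraph and your observation that only the equivalence class of $v$ modulo the map $v\mapsto(f(\tau,x,u^\epar_j,v))_{j}$ can ever be recovered are exactly right.

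The gap is in the identification step, which you correctly single out as the heart of the matter but then resolve with a tool that does not deliver. You claim that, for a fixed partition and an arbitrary disturbance realization, "the true disturbance nearly attains" the minimum in \fref{bar_v_def}, so that $f(\tau_i,x(\tau_i),u^\epar_j,\bar v_i)$ is within $\beta_\epar=O(\epar)$ of the true velocity, the oscillation of $v$ over the test windows being controlled by Fr\'echet--Kolmogorov mean-equicontinuity of $\VB$. For a merely measurable $v(\cdot)$ this fails: $d_{ij}(x(\cdot))$ is the average of $s\mapsto f(s,x(s),u^\epar_j,v(s))$ over a window of length $\epar\diamin{\Part}/n_\epar$, and (take $\QC=\{-1,1\}$ as in the paper's example, with $v$ oscillating on the window) this average need not be close to $f(\tau_i,x(\tau_i),u^\epar_j,v')$ for \emph{any} single $v'\in\QC$; closeness to $f(\tau_i,x(\tau_i),u^\epar_j,v(\tau_i))$ holds only at (generalized) Lebesgue points of $v$, and the finitely many partition points carry no such guarantee. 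Mean-equicontinuity of $\VB$ bounds $\int_T\|v(\cdot+h)-v(\cdot)\|^2\,ds$ and hence only the \emph{measure} of the bad times, not their location, so no quantitative per-partition bound $\beta_\epar$ uniform over $\VB$ and over partitions is available. The paper's resolution is a limit argument rather than a uniform one: fix the limit motion $x_0(\cdot)\in\Xc(z_0,\Ue)$, use $L_2$-compactness only to extract $v_k\to v_0$ in $L_2$ and then a.e.; prove a Lebesgue-differentiation lemma for off-center windows of bounded eccentricity $c_\epar=n_\epar(3+\epar)/\epar$ (Lemmas \ref{teo_lebeg_app} and \ref{lem_local_disturb_approx_1}, via Luzin's theorem --- this is where the normalization $\diam{\Part}/\diamin{\Part}\le3$ and the placement \fref{dop_inst} of the test instants enter); and conclude that for a.e.\ $\tau$ the surrogate $\bar v_k(\tau)$ converges, as $k\to\infty$, to the equivalence class $q^\epar_\tau$ of $v_0(\tau)$ (Lemma \ref{lem_bvk_to_qtaue}), after which a.e.\ convergence plus domination suffices for the Gronwall comparison. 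Your sketch needs to be restructured along these lines: the identification is an almost-everywhere statement about the limit disturbance as $\diam{\Partk}\to0$, not an $O(\epar)$ estimate valid at every partition point for every $v\in\VB$.
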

%%%%%%%%%%%%%%%%%%%%%%

A proof is given in the next section.

\begin{example}
Let system
(\ref{sys})
has the form
%%%%%%%
\beq
\label{sys_examp_2x2}
\begin{cases}
\dot x_1(\tau)=u_1(\tau)\cdot v_1(\tau),\\
\dot x_2(\tau)=\max\{0, x_1(\tau))\}\cdot u_2(\tau)\cdot  v_2(\tau),\\
(x_1(0),x_2(0))=(0,0),
\end{cases}
\quad
\tau\in T\mydef[0,1],
\quad
G_0=\{(0,0)\},
\eeq
%%%%%%%
$$
u_1(\tau), u_2(\tau)\in  [-1,1],\quad v_1(\tau),v_2(\tau)\in \{-1,1\},
$$
and the cost functional be given by $\gamma(x(\cdot))\mydef x_2(1)$ 
$ (x(\cdot) = (x_1(\cdot), x_2(\cdot)) \in C(T,\RA^2)) $.
With an appropriate choice of
$ G $,
system \fref{sys_examp_2x2} 
satisfies all the assumptions imposed
earlier on system
(\ref{sys});
therefore,
Theorem
\ref{teo_Q_EQ_C}
holds,
implying the
full-memory control strategies
are non-improvable
against both the
$ L_2 $-compactly constrained and open-loop disturbances.
On the other hand, system
(\ref{sys})
does not satisfy the 
conditions sufficient for the non-improvability
of the full-memory control strategies
against the
$ L_2 $-compactly constrained disturbances,
which are given in
\cite{Kry91} 
(Theorem 9.1)
and in
\cite{Ser_DAN2013et}
(Theorem 2).
One can easily find that
$
\Gc((0,0))=\Gp((0,0))=\Gq((0,0))=-0.5 
$
and
$\Gs((0,0))=0$. 
\end{example}

%%%%%%%%%%%%%%%%%%%
\section {Proof of Theorem \ref{teo_Q_EQ_C}}
%%%%%%%%%%%%%%%%%%%
Since
(\ref{Ue_approx_gamma_ps})
and
(\ref{neq_gam})
imply
(\ref{gamma_pr_eq_low_game_value}),
it is sufficient to prove
(\ref{Ue_approx_gamma_ps}).

Introduce some definitions and notations.

For every $p,q:T\mapsto S$ where 
$ S $ is a nonempty set
and every
$t'\in[t,\vartheta] $ we denote
$$%\beqnt
(p,q)_{t'}(\tau)\mydef
\begin{cases}
p(\tau),&\tau\in[t_0,t'),\\
q(\tau),&\tau\in[t',\vartheta].
\end{cases}
$$%\eeq

After \cite{KraSub88e}, we call a set $W\subseteq C({T};\RA^n)$  $u$-{\em stable}, if for any $ [\tau_*,\tau^*]\subseteq{T}$, $v_*\in\QC$, $x_*(\cdot)\in W|_{[t_0,\tau_*]}$ there is a solution $x(\cdot)$ of the differential inclusion
$$%\beqnt%\label{incl_u_stab}
\begin{cases}
\dot x(\tau)\in\FC_u(\tau,x(\tau),v_*) &\text{for \aee\ }\tau\in[\tau_*,\tau^*],\\
x(\tau_*)=x_*(\tau_*)
\end{cases}
$$%\eeq
such that  $ (x_*,x)_{\tau_*}(\cdot)\in W|_{[t_0,\tau^*]}$;
here
$
\FC_u(\tau,x,v)
$
is the closed convex hull 
of
the set
$ \{ f(\tau,x,u,v) : u\in\PC\} $
in
$ \RA^n $.

We set
$$
I_c(\tau)\mydef\left\{(a,b)\in\RA^2 :  b>a, \frac{\max\{|b-\tau|,|a-\tau|\}}{b-a}\le c\right\}\quad 
(c \geq 1/2,\quad \tau\in T)
$$
and
for an arbitrary measurable set $A\subseteq\RA $ denote
%%%%%
\beq\label{def_Aprim}
A'_c\mydef\left\{\tau\in\RA \ : \lim_{a,b\to\tau\atop (a,b)\in I_c(\tau)}\frac{\mes(A\cap[a,b])}{b-a}=1\right\};
\eeq
%%%%%
here and below
$ \lambda $
is the Lebesgue measure.
\begin{lemma}\label{teo_lebeg_app}
For any measurable set $A\subseteq\RA$ and any $c\geq 1/2 $
it holds that 
\beq\label{eq_lebeg_app}
\mes(A\triangle A'_c)\mydef\mes((A\backslash A'_c)\cup(A'_c\backslash A))=0.
\eeq
\end{lemma}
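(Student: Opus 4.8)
The plan is to reduce the statement to the one--dimensional Lebesgue differentiation theorem for indefinite integrals. Writing $\chi_A$ for the indicator function of $A$, I would introduce the absolutely continuous function $F(\tau)\mydef\int_{0}^{\tau}\chi_A(s)\,ds$, which is Lipschitz on $\RA$ with constant $1$. By the classical Lebesgue differentiation theorem, $F$ is differentiable at almost every $\tau$ with $F'(\tau)=\chi_A(\tau)\in\{0,1\}$. Let $N$ be the null set off which this fails; thus $\mes(N)=0$ and $F'(\tau)=\chi_A(\tau)$ for all $\tau\notin N$. The whole point is that differentiation of $F$ already encodes the density behaviour of $A$, and the family $I_c(\tau)$ is regular enough that the difference quotients of $F$ along it converge to the same derivative.

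Next I would verify that every differentiability point is good for $I_c(\tau)$. The two facts to combine are that $\mes(A\cap[a,b])=F(b)-F(a)$ for any $a<b$, and that membership $(a,b)\in I_c(\tau)$ gives the two--sided bound $|a-\tau|,|b-\tau|\le c(b-a)$. Fix $\tau\notin N$, set $g\mydef F'(\tau)=\chi_A(\tau)$, and use differentiability: given $\eta>0$ there is $\delta>0$ with $|F(\tau+t)-F(\tau)-gt|\le\eta|t|$ whenever $|t|\le\delta$. For $(a,b)\in I_c(\tau)$ with $b-a$ small enough that $c(b-a)<\delta$ (automatic once $a,b$ are close to $\tau$), put $a=\tau+t_1$, $b=\tau+t_2$ and subtract the two estimates, using $t_2-t_1=b-a$ and $|t_1|,|t_2|\le c(b-a)$:
$$
\LL|\frac{\mes(A\cap[a,b])}{b-a}-g\RR|
=\frac{|F(b)-F(a)-g(b-a)|}{b-a}
\le\frac{\eta(|t_1|+|t_2|)}{b-a}\le 2c\,\eta .
$$
As $\eta>0$ is arbitrary, the limit in \fref{def_Aprim} exists and equals $g=\chi_A(\tau)$.

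From this I would read off \fref{eq_lebeg_app} directly. For every $\tau\notin N$ the limit equals $\chi_A(\tau)\in\{0,1\}$, so it equals $1$ if and only if $\chi_A(\tau)=1$, i.e.\ if and only if $\tau\in A$; hence $A$ and $A'_c$ coincide outside $N$. Consequently $A\triangle A'_c\subseteq N$ and $\mes(A\triangle A'_c)\le\mes(N)=0$, which is the claim.

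The only point requiring care --- and the one I would single out as the main, though minor, obstacle --- is that for $c>1$ the interval $[a,b]$ need not contain $\tau$, so a naive one--sided density argument would not apply cleanly. Routing everything through the indefinite integral $F$ sidesteps this: the estimate above uses only the bound $|a-\tau|,|b-\tau|\le c(b-a)$, irrespective of the signs of $t_1,t_2$, and the eccentricity constant $c$ enters solely as a harmless factor $2c$ multiplying the vanishing quantity $\eta$. Equivalently, one could invoke the Lebesgue differentiation theorem for regularly (``nicely'') shrinking families, since each $[a,b]\in I_c(\tau)$ lies in the centred interval $[\tau-R,\tau+R]$ with $R=\max\{|a-\tau|,|b-\tau|\}$ and $\mes([a,b])\ge R/c=\mes([\tau-R,\tau+R])/(2c)$; but the elementary derivative computation is the shortest route.
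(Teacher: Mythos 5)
Your proof is correct and follows essentially the same route as the paper's: both reduce the claim to the Lebesgue differentiation theorem for the indefinite integral of $\chi_A$, and both exploit the bound $\max\{|a-\tau|,|b-\tau|\}\le c\,(b-a)$ from the definition of $I_c(\tau)$ to control the difference quotient $\bigl(F(b)-F(a)\bigr)/(b-a)$ uniformly over the eccentric family, concluding that the density limit equals $\chi_A(\tau)$ at every differentiability point. The only differences are notational (your $\eta$--$\delta$ bookkeeping with the factor $2c$ versus the paper's little-$o$ functions $O_\tau$ with the factor $c$), so nothing of substance separates the two arguments.
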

%%%%%%
\begin{remark}
For $c=1/2$ the 
second equality in
(\ref{eq_lebeg_app})
was proved in \cite{Oxtobye} (Theorem 3.20) and in \cite{Natansonet} (ch.IX, \S6). 
The lemma follows from \cite[Theorem 7.10]{Rudin1986}. We give simplified proof.
\end{remark}

\begin{proof} 
Let $t_0\in\RA$ and absolutely continuous function $\Lambda:\RA\mapsto\RA$ has the form
$$
\Lambda(\tau)\mydef\int\limits_{t_0}^\tau\chi_A(s)\,ds,\quad
\chi_A(s)\mydef
\begin{cases}
1,&s\in A,\\
0,&s\not\in A
\end{cases}
$$
where $ \chi_A(\cdot)$ denotes the characteristic function of the set $A$. The Lebesgue differentiation theorem implies that for \aee $\tau\in\RA$ the function $\Lambda(\cdot)$ has the derivative at the point $\tau$, that equals to the value $\chi_A(\tau)$. Let ${\cal D}_A\subset\RA$ denotes the set of all Lebesgue points of $\Lambda(\cdot)$ points of differentiability of the function $\Lambda(\cdot)$. Thus, for every $\tau\in{\cal D}_A$ there exists a function $O_\tau(\cdot):\RA\mapsto\RA$, such that $\lim_{\delta\to0}O_\tau(\delta)=0$ and for any $a,b\in\RA$ the equalities hold
\begin{gather*}
\Lambda(a)=\Lambda(\tau)+(a-\tau)\chi_A(\tau)+(a-\tau)O_\tau(a-\tau),\\
\Lambda(b)=\Lambda(\tau)+(b-\tau)\chi_A(\tau)+(b-\tau)O_\tau(b-\tau).
\end{gather*}
Subtracting the second equation from the first one and dividing by $b-a$ we obtain the relations
\begin{multline*}
\left|\frac{\Lambda(b)-\Lambda(a)}{b-a}-\chi_A(\tau)\right|=\left|\frac{(b-\tau)O_\tau(b-\tau)-(a-\tau)O_\tau(a-\tau)}{b-a}\right|\\
\le\frac{\max\{|b-\tau|,|a-\tau|\}}{b-a}(|O_\tau(a-\tau)|+|O_\tau(b-\tau)|)\\
\le c(|O_\tau(a-\tau)|+|O_\tau(b-\tau)|)
\end{multline*}
 for any $a,b\in I_c(\tau)$. Hence, the equality holds
$$
\lim_{a,b\to\tau\atop (a,b)\in I_c(\tau)}\frac{\Lambda(b)-\Lambda(a)}{b-a}=\chi_A(\tau),\qquad  \tau\in{\cal D}_A.
$$
Taking into account the definition of the function $\Lambda(\cdot)$, the last statement can be rewritten as
$$
\lim_{a,b\to\tau\atop (a,b)\in I_c(\tau)}\frac{\mes(A\cap[a,b])}{b-a}=\chi_A(\tau),\qquad\tau\in{\cal D}_A.
$$
This relation shows, that for \aee $\tau\in A$  the inclusion $\tau\in A'_c$ holds and, vice versa, for \aee $ \tau\in A'_c$ the relation $\tau\in A$ is fulfilled.
\end{proof}

We set $X(G_0) = \{ x(\cdot, t_0, z_0, u(\cdot), v(\cdot)) : z_0 \in G_0, \ u(\cdot) \in {\cal U}, \ v(\cdot) \in {\cal V} \}$.

%%%%%%%%%%%%%
\begin{lemma}\label{lem_local_disturb_approx_1}
Let $c\geq 1/2 $ and  $v(\cdot)\in\Vpr$. Then 
\beq\label{bar_v_est_1}
\lim_{a,b\to\tau\atop{(a,b)\in I_c(\tau)\atop a,b\in T}}\sup_{\scriptsize u\in\PC\atop x(\cdot)\in X(G_0)}
\Bigl\|(b-a)^{-1}\int\limits_{[a,b]}f(s,x(s),u,v(s))\,ds-f(\tau,x(\tau),u,v(\tau))\Bigr\|=0
\eeq
for
\aee $ \tau\in{T} $.
\end{lemma}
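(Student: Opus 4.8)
The plan is to deduce the uniform statement from the scalar differentiation result of Lemma~\ref{teo_lebeg_app} by separating the regularity coming from the trajectories and from the vector field $f$ from the genuinely measure--theoretic averaging of the disturbance $v(\cdot)$. As preparation I would record two facts. First, every $x(\cdot)\in X(G_0)$ is a Carath\'eodory solution staying in $G$, so by \fref{kappa_G} it is $\varkappa$-Lipschitz and its values lie in the compact projection $\hat G\subset\RA^n$ of $G$ onto $\RA^n$; in particular $\|x(s)-x(\tau)\|\le\varkappa|s-\tau|$ for all such $x(\cdot)$. Second, $f$ is uniformly continuous on the compact set $T\times\hat G\times\PC\times\QC$; let $\omega(\cdot)$ be a corresponding modulus, so that $\|f(\sigma,\xi,u,w)-f(\sigma',\xi',u',w)\|\le\omega(|\sigma-\sigma'|+\|\xi-\xi'\|+\|u-u'\|)$ for all admissible arguments, with $\omega(r)\to0$ as $r\to0$. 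Writing $R\mydef\max\{|a-\tau|,|b-\tau|\}$, I would then split the integrand difference, for fixed $u\in\PC$ and $x(\cdot)\in X(G_0)$, as
\[
f(s,x(s),u,v(s))-f(\tau,x(\tau),u,v(\tau))=A_s+B_s,
\]
where $A_s\mydef f(s,x(s),u,v(s))-f(\tau,x(\tau),u,v(s))$ and $B_s\mydef f(\tau,x(\tau),u,v(s))-f(\tau,x(\tau),u,v(\tau))$, and treat the averages of $A_s$ and of $B_s$ separately.

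The average of $A_s$ is the easy, purely deterministic part. Since only the first two arguments of $f$ change and $\|x(s)-x(\tau)\|\le\varkappa|s-\tau|\le\varkappa R$, I get $\|A_s\|\le\omega((1+\varkappa)R)$ for every $s\in[a,b]$, uniformly in $u\in\PC$, in $x(\cdot)\in X(G_0)$ and in the value $v(s)\in\QC$. Hence $\|(b-a)^{-1}\int_{[a,b]}A_s\,ds\|\le\omega((1+\varkappa)R)$, which tends to $0$ as $a,b\to\tau$ for every $\tau\in T$, with no exceptional set.

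The average of $B_s$ is the crux and is where Lemma~\ref{teo_lebeg_app} enters. Setting $\xi\mydef x(\tau)\in\hat G$ and $g_{\xi,u}(s)\mydef f(s,\xi,u,v(s))$, and using $f(\tau,\xi,u,v(\tau))=g_{\xi,u}(\tau)$ together with $\|f(\tau,\xi,u,v(s))-g_{\xi,u}(s)\|\le\omega(R)$, I would rewrite the average of $B_s$ as $\big[(b-a)^{-1}\int_{[a,b]}g_{\xi,u}(s)\,ds-g_{\xi,u}(\tau)\big]$ up to an error of norm at most $\omega(R)$. Each $g_{\xi,u}(\cdot)$ is a bounded measurable $\RA^n$-valued function, so, extending the argument in the proof of Lemma~\ref{teo_lebeg_app} from characteristic functions to arbitrary $L^1$ functions (the first-order expansion there only uses the ratio bound $\le c$ defining $I_c(\tau)$), for each fixed $(\xi,u)$ and a.a.\ $\tau$ one has $(b-a)^{-1}\int_{[a,b]}g_{\xi,u}\to g_{\xi,u}(\tau)$ along $(a,b)\in I_c(\tau)$. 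To make this hold simultaneously and uniformly in $(\xi,u)$, I would fix a countable dense set $D\subset\hat G\times\PC$, let $N$ be the null union over $d\in D$ and over the $n$ coordinates of the exceptional sets just described, and fix $\tau\notin N$. Given $\eta>0$, I choose by uniform continuity a $\rho>0$ with $\omega(\rho)\le\eta$ and a finite $\rho$-net $\{d_1,\dots,d_M\}\subset D$ of the compact set $\hat G\times\PC$; since $\tau$ is a common Lebesgue point of $g_{d_1},\dots,g_{d_M}$, there is $\delta>0$ with $\|(b-a)^{-1}\int_{[a,b]}g_{d_j}-g_{d_j}(\tau)\|\le\eta$ for all $j$ and all $(a,b)\in I_c(\tau)$ with $R<\delta$. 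A triangle inequality, with the two end terms bounded by $\omega(\rho)\le\eta$ via the net, then gives $\sup_{(\xi,u)}\|(b-a)^{-1}\int_{[a,b]}g_{\xi,u}-g_{\xi,u}(\tau)\|\le3\eta$ once $R<\delta$. Combining the $A$- and $B$-estimates and letting $R\to0$ yields \fref{bar_v_est_1} for every $\tau\notin N$.

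The only real obstacle is this last uniformity: the naive Lebesgue-point statement produces a null set depending on $(\xi,u)$ and gives no control uniform in those parameters or in the trajectory. Both difficulties are resolved by the same device --- uniform continuity of $f$ on the compact set $T\times\hat G\times\PC\times\QC$ reduces the supremum over $u\in\PC$ and over $x(\cdot)\in X(G_0)$ (which enters $B_s$ only through $\xi=x(\tau)$) to a finite net in $D$, while the $I_c(\tau)$-constraint furnished by Lemma~\ref{teo_lebeg_app} guarantees that averaging over one-sided, off-centre intervals is still controlled. I expect no further difficulty, the $A$-term and the net reduction being routine once the equi-Lipschitz bound and the modulus $\omega$ are in place.
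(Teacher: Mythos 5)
Your argument is correct, but it reaches \fref{bar_v_est_1} by a genuinely different route than the paper. The paper regularizes the disturbance itself: it applies Luzin's theorem to extract a closed set $E_\varepsilon\subseteq T$ with $\mes(T\setminus E_\varepsilon)\le\varepsilon$ on which $v(\cdot)$ is continuous, deduces that the composite maps $s\mapsto f(s,x(s),u,v(s))$ restricted to $E_\varepsilon$ are equicontinuous uniformly in $u\in\PC$ and $x(\cdot)\in X(G_0)$, splits the average over $[a,b]$ into the parts on $[a,b]\cap E_\varepsilon$ and $[a,b]\setminus E_\varepsilon$, and kills the second part at every density point of $E_\varepsilon$ by applying Lemma~\ref{teo_lebeg_app} exactly in the form stated (to the set $E_\varepsilon$); uniformity in $(u,x(\cdot))$ then comes for free from the equicontinuity, and the exceptional set is controlled by letting $\varepsilon\to0$. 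You instead split the integrand into the deterministic increment $A_s$ (no exceptional set needed) and the disturbance-averaging term $B_s$, freeze the trajectory at $\xi=x(\tau)$, and invoke Lebesgue differentiation for the $\tau$-independent vector-valued functions $g_{\xi,u}(s)=f(s,\xi,u,v(s))$, recovering uniformity over the parameters via a countable dense set and a finite net. This needs the $L^1$-function version of Lemma~\ref{teo_lebeg_app} rather than the set version; your remark that the paper's own proof already establishes it is accurate, since that proof runs through differentiability of the indefinite integral and only specializes to $\chi_A$ at the end, so the appeal is legitimate (componentwise for $\RA^n$-valued $g_{\xi,u}$). The trade-off: the paper pays with Luzin's theorem and gets uniformity automatically; you avoid Luzin entirely and make the uniformity mechanism explicit, at the cost of routine separability and net bookkeeping. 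Both proofs are sound.
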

%%%%%%%%%%%%%
\begin{proof}
Fix a $c\geq 1/2 $, $v(\cdot)\in\Vpr$ and note that by \fref{kappa_G} for all $x(\cdot)\in X(G_0)$ we have
\beq\label{bar_v_est_3}
\sup_{s,\tau\in T}\|x(\tau)-x(s)\|\le\varkappa(G)|\tau-s|.
\eeq
Choose any $\varepsilon>0$. By Luzin's theorem (see \cite[Ch.4]{Natansonet}) 
there is a closed measurable set $E_\varepsilon\subseteq{T}$ such that
\beq\label{bar_v_est_4}
\mes({T}\backslash E_\varepsilon)\le\varepsilon,\quad v(\cdot)\in C(E_\varepsilon,\RA^q).
\eeq
%%%
Let $E_\varepsilon'$ be the set of all density points of $E_\varepsilon$ (see \fref {def_Aprim}). From the closedness of $E_\varepsilon$ 
it follows that $E_\varepsilon'\subseteq E_\varepsilon$. 
By the Lemma \ref{teo_lebeg_app}
applied to
$E_\varepsilon$ we have 
\beq
\mes({T}\backslash E_\varepsilon')\le\varepsilon. 
\label{mmm}
\eeq
The continuity of the right-hand side of equation \fref{sys} in $G\times\PC\times\QC $, the compactness of the latter set and relations \fref{bar_v_est_3}, \fref{bar_v_est_4} imply 
that the functions $ s\mapsto f(s,x(s),u,v(s)): E_\varepsilon \mapsto \RA^n$ are equicontinuous 
with respect to $u\in\PC$, $x(\cdot)\in X(G_0)$, that is, there exists a $\varphi_\varepsilon(\cdot):(0,+\infty)\mapsto(0,+\infty)$ (depending on $E_\varepsilon$) such that $\mylim_{\delta\rightarrow 0}\varphi_\varepsilon(\delta)=0$ and for all $s,\tau\in E_\varepsilon$ it holds that
\begin{equation}
\sup_{\scriptsize u\in\PC\atop x(\cdot)\in X(G_0)}\|f(s,x(s),u,v(s))-f(\tau,x(\tau),u,v(\tau))\|\le\varphi_\varepsilon(|s-\tau|).
\label{bar_v_est_5}
\end{equation}

For all $a,b,\tau\in T$, $a<b$, we have 
\begin{multline*}%\label{bar_v_est_13}
\sup_{\scriptsize u\in\PC\atop x(\cdot)\in X(G_0)}\Bigl\|(b-a)^{-1}\int\limits_{[a,b]}f(s,x(s),u,v(s))\,ds-f(\tau,x(\tau),u,v(\tau))\Bigr\|\\
=\sup_{\scriptsize u\in\PC\atop x(\cdot)\in X(G_0)}(b-a)^{-1}\Bigl\|\int\limits_{[a,b]}f(s,x(s),u,v(s))\,ds-\int\limits_{[a,b]}f(\tau,x(\tau),u,v(\tau))ds\Bigr\|\\
\le\sup_{\scriptsize u\in\PC\atop x(\cdot)\in X(G_0)}(b-a)^{-1}\int\limits_{[a,b]}\bigl\|f(s,x(s),u,v(s))-f(\tau,x(\tau),u,v(\tau))\bigr\|\,ds.
\end{multline*}
%%%
Let $\tau\in E'_\varepsilon$. We decompose the last integral into the sum of two ones, using the set $ E_\varepsilon$, 
and apply to the first term the  estimate \fref{bar_v_est_5}. 
Thus, we continue the calculations as follows:
\begin{multline*}%\label{bar_v_est_13}
=\sup_{\scriptsize u\in\PC\atop x(\cdot)\in X(G_0)}(b-a)^{-1}\int\limits_{[a,b]\cap E_\varepsilon}\left\|f(s,x(s),u,v(s))-f(\tau,x(\tau),u,v(\tau))\right\|\,ds\\
+\sup_{\scriptsize u\in\PC\atop x(\cdot)\in X(G_0)}(b-a)^{-1}\int\limits_{[a,b]\setminus  E_\varepsilon}\left\|f(s,x(s),u,v(s))-f(\tau,x(\tau),u,v(\tau))\right\|\,ds\\
\le\varphi_\varepsilon(\max\{|a-\tau|,|b-\tau|\})+2\varkappa(G)\frac{\mes([a,b]\setminus E_\varepsilon)}{b-a}.
\end{multline*}
%%%
Therefore, we get
\begin{multline*}%\label{bar_v_est_13}
\sup_{\scriptsize u\in\PC\atop x(\cdot)\in X(G_0)}\Bigl\|(b-a)^{-1}\int\limits_{[a,b]}f(s,x(s),u,v(s))\,ds-f(\tau,x(\tau),u,v(\tau))\Bigr\|\\
\le\varphi_\varepsilon(\max\{|a-\tau|,|b-\tau|\})+2\varkappa(G)\frac{\mes([a,b]\setminus E_\varepsilon)}{b-a}
\end{multline*}
where the right-hand side (since $\tau \in E'_{\varepsilon}$) tends to zero as $(a,b)\in I_c(\tau)$ and $a,b\to\tau$.
Thus, for all
$\tau \in E'_{\varepsilon} $
we have
(\ref{bar_v_est_1}).
In view of
(\ref{mmm})
and the arbitrary choice of
$ \varepsilon > 0 $
we get that
(\ref{bar_v_est_1})
holds for
\aee
$ \tau \in T $.
\end{proof}

\begin{lemma}\label{lem_z_pr_na_u_stab}
For any position $z\in G_0$ the set $\Wq(z)$ is compact in $ C (T, \RA ^ n) $, $u$--stable, change upper semicontinuously by inclusion with respect to parameter $z\in G_0$ and satisfies the relations
\beq\label {z_pr_na_gives_opt_guarantee}
\max_{w(\cdot)\in\Wq(z)}\qndx(w(\cdot)) = \Gq (z), \quad z\in\Wq(z)|_{t_0}, \ z\in G_0.
\eeq
\end {lemma}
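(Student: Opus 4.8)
The plan is to treat the four assertions in order of difficulty, reserving $u$-stability for last as the main obstacle. Throughout I will use two uniform consequences of the standing hypotheses on $f$. First, every motion is $\varkappa$-Lipschitz and stays in the compact $G$ (cf.\ \fref{bar_v_est_3}), so by the Arzel\`a--Ascoli theorem the set of motions issuing from a fixed $z$ is precompact in $C(T;\RA^n)$. Second, with $L$ a Lipschitz constant of $f(\tau,\cdot,u,v)$ on $G$, Gr\"onwall's inequality gives $\|x(\cdot,t_0,z_1,u(\cdot),v(\cdot))-x(\cdot,t_0,z_2,u(\cdot),v(\cdot))\|_C\le e^{L(\vartheta-t_0)}\|z_1-z_2\|$, uniformly in $u(\cdot),v(\cdot)$. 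Combined with the uniform continuity of $\qndx$ on the (compact) set of motions, this shows $z\mapsto\Gq(z,\alpha(\cdot))$ is continuous with a common modulus $\omega$ independent of $\alpha(\cdot)$, whence $\Gq(\cdot)=\inf_{\alpha(\cdot)}\Gq(\cdot,\alpha(\cdot))$ inherits the modulus $\omega$; I will need this for the last two parts.

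For the elementary assertions: each $\close\bigcup_{\alpha(\cdot)\in\Uqs(z,\delta)}\Xs(z,\alpha(\cdot))$ is a closed subset of the compact closure of the motions from $z$, hence compact and nonempty, and these sets shrink as $\delta\downarrow0$, so $\Wq(z)$ is compact and nonempty. Since every motion in $\Xs(z,\alpha(\cdot))$ takes value $z$ at $t_0$, so does every element of $\Wq(z)$, giving $z\in\Wq(z)|_{t_0}$ (indeed $\Wq(z)|_{t_0}=\{z\}$). The bound $\max_{w}\qndx(w)\le\Gq(z)$ follows from the definition: any $w(\cdot)\in\Wq(z)$ is, for each $\delta>0$, a $C$-limit of motions of quasi-strategies in $\Uqs(z,\delta)$, on which $\qndx\le\Gq(z)+\delta$, so continuity of $\qndx$ and arbitrariness of $\delta$ give $\qndx(w(\cdot))\le\Gq(z)$. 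Conversely, taking $\delta_m\downarrow0$, $\alpha_m(\cdot)\in\Uqs(z,\delta_m)$ and motions $x_m(\cdot)\in\Xs(z,\alpha_m(\cdot))$ with $\qndx(x_m(\cdot))\ge\Gq(z,\alpha_m(\cdot))-\delta_m\ge\Gq(z)-\delta_m$, a convergent subsequence $x_m(\cdot)\to w(\cdot)$ lies in $\Wq(z)$ (for every fixed $\delta'$ one has $\alpha_m(\cdot)\in\Uqs(z,\delta')$ once $\delta_m<\delta'$) and satisfies $\qndx(w(\cdot))\ge\Gq(z)$, proving the equality in \fref{z_pr_na_gives_opt_guarantee}, the maximum being attained by compactness.

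The heart of the proof is $u$-stability. Fix $[\tau_*,\tau^*]\subseteq T$, $v_*\in\QC$ and $x_*(\cdot)\in\Wq(z)|_{[t_0,\tau_*]}$, and pick $\hat x(\cdot)\in\Wq(z)$ with $\hat x(\cdot)|_{[t_0,\tau_*]}=x_*(\cdot)$. For $\delta_m\downarrow0$ choose $\alpha_m(\cdot)\in\Uqs(z,\delta_m)$ and $v_m(\cdot)\in\Vpr$ whose motion is $C$-close to $\hat x(\cdot)$. The key idea is to feed the same quasi-strategy the switched disturbance $\tilde v_m(\cdot)\mydef(v_m,v_*)_{\tau_*}(\cdot)$ and consider $\tilde x_m(\cdot)=x(\cdot,t_0,z,\alpha_m(\tilde v_m(\cdot)),\tilde v_m(\cdot))\in\Xs(z,\alpha_m(\cdot))$. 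By non-anticipativity $\alpha_m(\tilde v_m(\cdot))$ agrees with $\alpha_m(v_m(\cdot))$ on $[t_0,\tau_*]$, so $\tilde x_m(\cdot)|_{[t_0,\tau_*]}$ still approaches $x_*(\cdot)$; and on $[\tau_*,\tau^*]$ the disturbance is the constant $v_*$, so $\dot{\tilde x}_m(\tau)=f(\tau,\tilde x_m(\tau),\alpha_m(\tilde v_m(\cdot))(\tau),v_*)\in\FC_u(\tau,\tilde x_m(\tau),v_*)$. A convergent subsequence $\tilde x_m(\cdot)\to x^\sharp(\cdot)$ then satisfies $x^\sharp(\cdot)\in\Wq(z)$ (same argument as above), $x^\sharp(\cdot)|_{[t_0,\tau_*]}=x_*(\cdot)$, and---by the standard closure theorem for differential inclusions, the map $(\tau,x)\mapsto\FC_u(\tau,x,v_*)$ being compact-, convex-valued and upper semicontinuous---solves $\dot x\in\FC_u(\tau,x,v_*)$ on $[\tau_*,\tau^*]$. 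Thus $x(\cdot)\mydef x^\sharp(\cdot)|_{[\tau_*,\tau^*]}$ is the required solution and $(x_*,x)_{\tau_*}(\cdot)=x^\sharp(\cdot)|_{[t_0,\tau^*]}\in\Wq(z)|_{[t_0,\tau^*]}$. The delicate points are the legitimacy of switching the disturbance (handled by non-anticipativity) and the passage to the limit inside the inclusion; I expect this to be the main obstacle.

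Finally, for upper semicontinuity in $z$ I argue sequentially: if $z_k\to z$, $w_k(\cdot)\in\Wq(z_k)$ and $w_k(\cdot)\to w(\cdot)$, I must show $w(\cdot)\in\close\bigcup_{\alpha(\cdot)\in\Uqs(z,\delta)}\Xs(z,\alpha(\cdot))$ for every $\delta>0$. Approximate $w_k(\cdot)$ by a motion of some $\alpha_k(\cdot)\in\Uqs(z_k,1/k)$ issuing from $z_k$, then replace the start point $z_k$ by $z$ in the same control/disturbance pair; the uniform Gr\"onwall bound shows the new motion (from $z$, under $\alpha_k(\cdot)$) still converges to $w(\cdot)$. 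Continuity of $\Gq(\cdot)$ and the $\alpha$-uniform continuity of $\Gq(\cdot,\alpha(\cdot))$ give $\Gq(z,\alpha_k(\cdot))\le\Gq(z_k)+1/k+2\,\omega(\|z-z_k\|)\le\Gq(z)+\delta$ for large $k$, so $\alpha_k(\cdot)\in\Uqs(z,\delta)$ and the approximating motion belongs to $\bigcup_{\alpha(\cdot)\in\Uqs(z,\delta)}\Xs(z,\alpha(\cdot))$; letting $k\to\infty$ places $w(\cdot)$ in its closure, as required.
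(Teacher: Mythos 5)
Your proof is correct and complete in all four parts: the nested-compact-sets argument for compactness, nonemptiness and the value identity \fref{z_pr_na_gives_opt_guarantee}; the disturbance-switching construction $(v_m,v_*)_{\tau_*}$ combined with non-anticipativity and the closure theorem for differential inclusions with upper semicontinuous compact convex values for $u$-stability; and the Gr\"onwall/uniform-modulus estimate for upper semicontinuity in $z$. The paper itself gives no proof of this lemma, deferring to Lemma 5.1 of the cited 2012 reference as ``standard for the theory of closed-loop differential games,'' and your argument is exactly that standard Krasovskii--Subbotin-type scheme, so there is nothing to fault and no genuinely different route to compare.
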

The proof of Lemma \ref{lem_z_pr_na_u_stab} is standard for  theory of closed-loop differential games and follows \cite[Lemma 5.1]{Ser_MGTA2012e}. 

The inclusion $z\in\Wq(z)|_{t_0}$ and $u$--stability of the set $ \Wq(z)$ imply the inequality $\Wq(z)|_{[t_0,\tau]}\neq\varnothing$, $\tau\in{T}$. And in view of the compactness in $C(T;\RA^n)$ of the set $\Wq(z)$, projections \fref{pro_on_Wq} are defined correctly.

Now we fix an $\epar>0$, a $z_0\in G_0$ and a motion ${x}_0(\cdot)\in\Xc(z_0,\Ue)$. 
By the definition of $ \Xc(z_0,\Ue)$ there exist a $\VB\in\cVpr$
and a sequence
$(z_{0k},v_k(\cdot),\Partk,\Uepark)_{k=1}^{\infty} $ 
in $G_0\times \VB \times\Pparset\times\Ue$ 
such that
$\lim_{k\rightarrow\infty}z_{0k}=z_0$, $\lim_{k\rightarrow\infty}\diam{\Partk}=0$
and
%%%%%%
\begin{equation}
x_k(\cdot) \rightarrow x_0(\cdot) \quad \mbox{in} \quad {C(T;\RA^n)},
\label{xk_to_x0}
\end{equation}
where $x_k(\cdot)\mydef x(\cdot,z_{0k},\Uepark,v_k(\cdot))$.
%%%%%%%
Since
sequence
$(v_k(\cdot))_{k=1}^{\infty} $ 
lies in 
$ \VB\in \mbox{comp}_{L_2}({\cal V}) $,
it has a subsequence
convergent in
$ L_2(T;\RA^q) $.
With no loss of generality
(selecting, if necessary, a subsequence),
we get
%%%%%%%
\begin{equation}
v_k(\cdot) \rightarrow v_0(\cdot) \quad \mbox{in} \quad {L_2(T;\RA^n)}.
\label{vk_to_v0_L2}
\end{equation}
%%%%%%%
Obviously, $ v_0(\cdot) \in {\cal V}. $
%%%%%%%
Using \fref{vk_to_v0_L2} and a convergence property of measurable functions (see \cite[Theorem I.4.8]{Wargae}),
we find that,
with no loss of generality
(if necessary, we select a subsequence),
it holds that
%%%
\beq\label{vk_to_v0}
\lim_{k\rightarrow\infty}v_k(\tau)=v_0(\tau),\quad \text{for \aee} \quad \tau\in{T}.
\eeq
%%%
For every $k\in \{1,2,\ldots \} $ denote
$$%\beqnt%\label{sbs_x_u}
u_k(\cdot)\mydef u(\cdot,z_{0k},\Uepark,v_k(\cdot)),
$$%\eeq
let $\Partk\mydef\cPartk$, 
\beq\label{thin-out-partk}
\diam{\Partk}/\diamin{\Partk}\le3
\eeq 
and, in accordance with \fref{dop_inst},  \fref{ddd}, set
\beq\label{dop_inst_k}
\tau_{ki}' = \tau_{ki}-\epar\diamin{\Partk},
\quad
\tau'_{kij} =  \tau_{ki}'+\frac{j(\tau_{ki}-\tau_{ki}')}{n_{\epar}},  \quad j\in\nint{0}{n_{\epar}},\quad i\in\nint{1}{(\nPartk-1)},
\eeq
$$
d_{kij} (x(\cdot))\mydef\frac{x(\tau'_{kij})-x(\tau'_{ki(j-1)})}{\tau'_{kij }-\tau'_{ki(j-1)}},\quad j\in\nint{1}{n_{\epar}}, \quad i\in\nint{1}{(\nPartk-1)}.
$$
Take a $k\in \{1,2,\ldots \} $. By the definition of the full-memory feedback $\Uepark= (\Uepk{i})_{i\in\nint{0}{(\nPart-1)}}$ for $\Partk$ (see \fref{u_0_bar_v_0}, \fref{Ulp0}, \fref{mov_y_def}, \fref{bar_v_def}, \fref{U_bPart_def}, \fref{Ulpi}) we also determine the motion $y_k(\cdot)$ of $y$--model, associated with $x_k(\cdot)$, control $\bar u_k(\cdot)$ and disturbance $\bar v_k(\cdot)$, that operate in the motion  $y_k(\cdot)$
\begin{gather}
y_k(\tau)=z_{0k}+\int\limits_{t_0}^\tau f(s,y_k(s),\bar u_k(s),\bar v_k(s))\,ds,\nonumber\\
\bar v_k(t)\mydef\bar v_{ki_t},\quad \bar u_k(t)\mydef u_{ki_t},\qquad \tau,s\in T,\label{sbs_y_bu_bv_inst}\\
\bar{v}_{k0}\mydef v_*,\quad u_{k0} \mydef u_{*},\quad  y_{k}(\tau_0)=z_{k0},\nonumber\\%\label{u_0_bar_v_0-1}
\Uepk{i}(x_{k}(\cdot)|_{[t_0,\tau_{k0}]})(t) =
\begin{cases}
u_{k0} ,&t\in[\tau_{k0},\tau_{k1}'),\\
u^\epar_j,&t\in[\tau'_{k1(j-1)},\tau'_{k1j}), \ j\in\nint{1}{n_\epar},
\end{cases}\nonumber\\%\label{Ulp0-1}
\bar{v}_{ki}\in\argmin_{v\in\QC}\mymax_{j\in\nint{1}{n_\epar}}\|d_{kij}(x_{k}(\cdot))-f(\tau_{ki},x_{k}(\tau_{ki}),u_j^\epar,v)\|,\nonumber\\%\label{bar_v_def}
u_{ki}\in\argmin_{u \in\PC}\SP{y_{k}(\tau_{ki})-w(\tau_{ki}\mid\tau_{ki}, y_{k}(\cdot))}{f(\tau_{ki},y_{k}(\tau_{ki}),u,\bar{v}_{ki})},\nonumber\\%\label{U_bPart_def}
\Uepk{i}(x_k(\cdot)|_{[t_0,\tau_{ki}]})(t)\mydef
\begin{cases}
u_{ki},&t\in[\tau_{ki},\tau_{k(i+1)}'),\\
u^\epar_j,&t\in[\tau'_{(ki+1)(j-1)},\tau'_{(ki+1)j}), j\in\nint{1}{n_\epar},
\end{cases}\nonumber%\label{Ulpi}
\end{gather}
where $u_{*}\in\PC$, $v_{*} \in \QC$,  $x_{k}(\cdot)|_{[t_0,\tau_{ki}]}$ is the restriction of $x_k(\cdot)$ to $[t_0, \tau_{ki}]$ ($[t_0, \tau_{k0}] = \{t_0\}$).

For all $(t, x)\in G$ and the $\epar$--net $(u_j^\epar)_{j\in\nint{1}{n_\epar}}$, we introduce the quotient set $\QC_{tx\epar}$ of the set $\QC$, generated by the equivalence relation $\mysim{tx\epar}$:
$$%\beqnt%\label{FM_Qe}
(v_1\mysim{tx\epar} v_2)\Leftrightarrow((\forall j\in\nint{1}{n_\epar}) f(t,x,u_j^\epar,v_1)=f(t,x,u_j^\epar,v_2)).
$$%\eeq
By the similar way for all $(t, x,u)\in G\times\PC$ we define the quotient set $\QC_{txu}$ of the set $\QC$, generated by the equivalence relation $\mysim{txu}$:
$$%\beqnt%\label{FM_Qu}
(v_1\mysim{txu} v_2)\Leftrightarrow(f(t,x,u,v_1)=f(t,x,u,v_2)).
$$%\eeq
For all $t\in T$ denote $q^\epar_t$, $q^u_t$ the equivalence classes, that satisfy the conditions 
$$
v_0(t)\in q^\epar_t\in\QC_{t x_0(t)\epar},\quad v_0(t)\in q^u_t\in\QC_{t x_0(t)u}.
$$
Then the equalities hold 
\beq\label{qe_eq_c_quej}
q^\epar_t=\bigcap_{j\in\nint{1}{n_\epar}}q^{u^\epar_j}_t\qquad t\in T.
\eeq

Given a metric space $(X,\rho)$ and non--empty subsets  $A,B\subset X$, denote $\dist{X}(A,B)$  Hausdorff semi-metric from $B$ to $A$:
$$
\dist{X}(A,B)\mydef \sup_{a\in A}\myinf_{b\in B}\rho(a,b).
$$

\begin{lemma}\label{lem_bvk_to_qtaue}
For \aee $\tau\in T$ the following equality holds
\beq\label{bvk_to_qtaue}
\lim_{k\rightarrow\infty}\dist{\RA^q}(\{\bar v_k(\tau)\},q^\epar_\tau) = 0.
\eeq
\end{lemma}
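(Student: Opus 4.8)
The plan is to deduce \fref{bvk_to_qtaue} from the single convergence
\[
d_{kij}(x_k(\cdot))\longrightarrow f(\tau,x_0(\tau),u^\epar_j,v_0(\tau))\qquad(k\to\infty,\ j\in\nint{1}{n_\epar}),
\]
valid for \aee $\tau\in T$, where $i=i_\tau$ is the index of $\Partk$ with $\tau_{ki}\le\tau<\tau_{k(i+1)}$, so that $\bar v_k(\tau)=\bar v_{ki}$ (see \fref{sbs_y_bu_bv_inst}) and $\tau_{ki}\to\tau$ because $\diam{\Partk}\to0$. Granting this, I would finish as follows. Writing $M_k(v)\mydef\mymax_{j\in\nint{1}{n_\epar}}\|d_{kij}(x_k(\cdot))-f(\tau_{ki},x_k(\tau_{ki}),u^\epar_j,v)\|$, the defining property \fref{bar_v_def} makes $\bar v_k(\tau)$ a minimiser of $M_k$. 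The displayed convergence, $x_k\to x_0$ in $C(T;\RA^n)$ and continuity of $f$ give $M_k(v_0(\tau))\to0$, whence $M_k(\bar v_k(\tau))\le M_k(v_0(\tau))\to0$, so $f(\tau_{ki},x_k(\tau_{ki}),u^\epar_j,\bar v_k(\tau))\to f(\tau,x_0(\tau),u^\epar_j,v_0(\tau))$ for every $j$. If \fref{bvk_to_qtaue} failed, some subsequence of $(\bar v_k(\tau))$ would stay at distance $\ge\rho>0$ from $q^\epar_\tau$; extracting a further subsequence convergent in the compact $\QC$ to some $\bar v^*$ and passing to the limit would yield $f(\tau,x_0(\tau),u^\epar_j,\bar v^*)=f(\tau,x_0(\tau),u^\epar_j,v_0(\tau))$ for all $j$, i.e.\ $\bar v^*\in q^\epar_\tau$ by the definition of the class, a contradiction.

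To prove the displayed convergence I would first record the representation
\[
d_{kij}(x_k(\cdot))=\frac1{\mes(I_{kj})}\int_{I_{kj}}f(s,x_k(s),u^\epar_j,v_k(s))\,ds,\qquad I_{kj}\mydef[\tau'_{ki(j-1)},\tau'_{kij}],
\]
which holds since on each test interval $I_{kj}$ the realised control equals $u^\epar_j$ (see \fref{dop_inst_k}, \fref{ddd}). The geometry is the crux: by \fref{thin-out-partk} (so $\diam{\Partk}\le3\diamin{\Partk}$) each $I_{kj}$ lies to the left of $\tau_{ki}\le\tau$ at distance at most $(3+\epar)\diamin{\Partk}$ and has length $\epar\diamin{\Partk}/n_\epar$; hence $I_{kj}\in I_c(\tau)$ with the fixed constant $c\mydef(3+\epar)n_\epar/\epar$, while $\mes(I_{kj})\to0$. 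I then split $d_{kij}(x_k(\cdot))-f(\tau,x_0(\tau),u^\epar_j,v_0(\tau))=A_{kj}+B_{kj}$ with
\[
A_{kj}\mydef\frac1{\mes(I_{kj})}\int_{I_{kj}}\bigl[f(s,x_k(s),u^\epar_j,v_k(s))-f(s,x_k(s),u^\epar_j,v_0(s))\bigr]\,ds,
\]
and $B_{kj}$ collecting the remainder. For $B_{kj}$, Lemma \ref{lem_local_disturb_approx_1} applied to $v_0$ (its supremum ranges over all $x(\cdot)\in X(G_0)\ni x_k(\cdot)$ and over $u\in\PC\ni u^\epar_j$), combined with $I_{kj}\in I_c(\tau)$, $x_k(\tau)\to x_0(\tau)$ and continuity of $f$, gives $B_{kj}\to0$ for \aee $\tau$.

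The main obstacle is $A_{kj}$, a genuine double limit: the intervals $I_{kj}$ shrink while the disturbances $v_k$ vary with $k$, and the averages need not vanish in general. I would control it via Egorov's theorem and the density--point Lemma \ref{teo_lebeg_app}. Put $h_k(s)\mydef\mymax_{j\in\nint{1}{n_\epar}}\|f(s,x_k(s),u^\epar_j,v_k(s))-f(s,x_k(s),u^\epar_j,v_0(s))\|$, so that $\|A_{kj}\|\le\mes(I_{kj})^{-1}\int_{I_{kj}}h_k$, $0\le h_k\le2\varkappa(G)$, and $h_k\to0$ for \aee $s\in T$ by \fref{vk_to_v0}, $x_k\to x_0$ and uniform continuity of $f$ on the compact $G\times\PC\times\QC$. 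Fix $\delta>0$; by Egorov's theorem there is a measurable $F\subseteq T$ with $\mes(T\setminus F)<\delta$ on which $h_k\to0$ uniformly, say $\eta_k\mydef\sup_{s\in F}h_k(s)\to0$. Then
\[
\frac1{\mes(I_{kj})}\int_{I_{kj}}h_k\le\eta_k+2\varkappa(G)\,\frac{\mes(I_{kj}\setminus F)}{\mes(I_{kj})}.
\]
For every $\tau$ in the density--point set $F'_c$ (same $c$ as above) the last fraction tends to $0$ as $I_{kj}\in I_c(\tau)$ and $\mes(I_{kj})\to0$, so $A_{kj}\to0$ there; and by Lemma \ref{teo_lebeg_app} $\mes(T\setminus F'_c)=\mes(T\setminus F)<\delta$. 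Letting $\delta=1/m\to0$ and taking the union of the exceptional sets yields $A_{kj}\to0$ for \aee $\tau$, which establishes the displayed convergence and hence, through the first paragraph, \fref{bvk_to_qtaue}. The only real difficulty is this estimate for $A_{kj}$; the remaining steps are continuity and compactness bookkeeping.
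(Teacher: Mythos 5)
Your proof is correct and follows essentially the same route as the paper's: represent $d_{kij}$ as an average of $f$ over the test interval, verify $I_{kj}\in I_{c_\epar}(\tau)$ with $c_\epar=n_\epar(3+\epar)/\epar$, apply Lemma~\ref{lem_local_disturb_approx_1} to the frozen-disturbance term, control the $v_k$-versus-$v_0$ discrepancy, and conclude via the minimality of $\bar v_k$ in \fref{bar_v_def} together with a compactness/contradiction argument in $\QC$. The only noteworthy divergence is that your Egorov-plus-density-point treatment of $A_{kj}$ spells out in full a step the paper compresses into a one-line appeal to Lemma~\ref{teo_lebeg_app} (its term $\Psi_{2kj}$ contains exactly this $k$-dependent average over shrinking intervals), and you apply Lemma~\ref{lem_local_disturb_approx_1} directly to $x_k(\cdot)$ rather than to the auxiliary trajectory $x_{k0}(\cdot)$ --- both legitimate since that lemma's supremum ranges over all of $X(G_0)$.
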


\begin{proof}
1. For all $ k\in\NA$, $j\in\nint{1}{n_{\epar}}$ and $\tau\in T$ denote
\begin{gather*}
x_{k0}(\cdot)\mydef x(\cdot,t_0,z_{0k},u_k(\cdot),v_0(\cdot)),\\
D_{kj}(\tau)\mydef d_{ki_\tau j}(x_k(\cdot))=\int\limits_{\tau'_{ki_\tau(j-1)}}^{\tau'_{ki_\tau j}}\frac{f(s,x_{k}(s),u_k(s),v_k(s))}{\tau'_{ki_\tau j}-\tau'_{ki_\tau(j-1)}}ds.
\end{gather*}
Note, that for all $k\in\NA$, $j\in\nint{1}{n_{\epar}}$, and $\tau\in T$ the following relations hold
\begin{gather*}
\bar{v}_k(\tau)\in\argmin_{v\in\QC}\mymax_{j\in\nint{1}{n_{\epar}}}\|D_{kj}(\tau)-f(\tau_{ki_\tau},x_{k}(\tau_{ki_\tau}),u_j^{\epar},v)\|,\\
u_k(\tau)=u_j^{\epar},\qquad \tau\in[\tau'_{ki_\tau(j-1)},\tau'_{ki_\tau j}).
\end{gather*}
In addition, due to convergence \fref{vk_to_v0}, the equality holds
$$%\beqnt%\label{xk0_to_xkk}
\lim_{k\rightarrow\infty}\|x_k(\cdot)-x_{k0}(\cdot)\|_{C({T};\RA^n)}=0.
$$%\eeq

2. We estimate the value
\begin{multline*}
\Bigl\|D_{kj}(\tau)-f(\tau,x_k(\tau),u_j^{\epar},v_k(\tau))\Bigr\|\\
=\Bigl\|\int\limits_{\tau'_{ki_\tau(j-1)}}^{\tau'_{ki_\tau j}}\frac{f(s,x_{k}(s),u_j^{\epar},v_k(s))}{\tau'_{ki_\tau j}-\tau'_{ki_\tau(j-1)}}ds-f(\tau,x_k(\tau),u_j^{\epar},v_k(\tau))\Bigr\|\\
\le\int\limits_{\tau'_{ki_\tau(j-1)}}^{\tau'_{ki_\tau j}}\Bigl\|\frac{f(s,x_k(s),u_j^{\epar},v_k(s))-f(s,x_k(s),u_j^{\epar},v_0(s))}{\tau'_{ki_\tau j}-\tau'_{ki_\tau(j-1)}}\Bigr\|ds\\
+\int\limits_{\tau'_{ki_\tau(j-1)}}^{\tau'_{ki_\tau j}}\Bigl\|\frac{f(s,x_k(s),u_j^{\epar},v_0(s))-f(s,x_{k0}(s),u_j^{\epar},v_0(s))}{\tau'_{ki_\tau j}-\tau'_{ki_\tau(j-1)}}\Bigr\|ds\\
+\Bigl\|\int\limits_{\tau'_{ki_\tau(j-1)}}^{\tau'_{ki_\tau j}}\frac{f(s,x_{k0}(s),u_j^{\epar},v_0(s))}{\tau'_{ki_\tau j}-\tau'_{ki_\tau(j-1)}}ds-f(\tau,x_{k0}(\tau),u_j^{\epar},v_0(\tau))\Bigr\|\\
+\|f(\tau,x_{k0}(\tau),u_j^{\epar},v_0(\tau))-f(\tau,x_k(\tau),u_j^{\epar},v_k(\tau))\|.
\end{multline*}
We use the properties of the uniform continuity and Lipschitz property of $f(\cdot)$ in area $G\times\PC\times\QC$ (continuing evaluation):
\begin{multline*}
\le\int\limits_{\tau'_{ki_\tau(j-1)}}^{\tau'_{ki_\tau j}}\frac{\mfv(\|v_k(s)-v_0(s)\|)+L_f(G)\|x_k(s)-x_{k0}(s)\|}{\tau'_{ki_\tau j}-\tau'_{ki_\tau(j-1)}}ds\\
+\Bigl\|\int\limits_{\tau'_{ki_\tau(j-1)}}^{\tau'_{ki_\tau j}}\frac{f(s,x_{k0}(s),u_j^{\epar},v_0(s))}{\tau'_{ki_\tau j}-\tau'_{ki_\tau(j-1)}}ds-f(\tau,x_{k0}(\tau),u_j^{\epar},v_0(\tau))\Bigr\|\\
+L_f(G)\|x_k(\tau)-x_{k0}(\tau)\|+\mfv(\|v_k(\tau)-v_0(\tau)\|)\\
\le\Bigl\|\int\limits_{\tau'_{ki_\tau(j-1)}}^{\tau'_{ki_\tau j}}\frac{f(s,x_{k0}(s),u_j^{\epar},v_0(s))}{\tau'_{ki_\tau j}-\tau'_{ki_\tau(j-1)}}ds-f(\tau,x_{k0}(\tau),u_j^{\epar},v_0(\tau))\Bigr\|\\
+\int\limits_{\tau'_{ki_\tau(j-1)}}^{\tau'_{ki_\tau j}}\frac{\mfv(\|v_k(s)-v_0(s)\|)}{\tau'_{ki_\tau j}-\tau'_{ki_\tau(j-1)}}ds+2L_f(G)\|x_k(\cdot)-x_{k0}(\cdot)\|_{C(T;\RA^n)}+\mfv(\|v_k(\tau)-v_0(\tau)\|);
\end{multline*}
here $\mfv(\cdot)$ denotes the modulus of continuity of $f(\cdot)$  in the forth argument:
$$%\beqnt%\label{mfv}
\mfv(\delta)\mydef\max_{|v-v'|\le\delta\atop{(\tau,x)\in G\atop u\in\PC, v,v'\in\QC}}\|f(\tau,x,u,v)-f(\tau,x,u,v')\|,\quad\lim_{\delta\rightarrow+0}\mfv(\delta)=0.
$$%\eeq
Thus,
\begin{multline}\label{Dkj_too_fvk}
\Bigl\|D_{kj}(\tau)-f(\tau,x_k(\tau),u_j^{\epar},v_k(\tau))\Bigr\|\\
\le\Bigl\|\int\limits_{\tau'_{ki_\tau(j-1)}}^{\tau'_{ki_\tau j}}\frac{f(s,x_{k0}(s),u_j^{\epar},v_0(s))}{\tau'_{ki_\tau j}-\tau'_{ki_\tau(j-1)}}ds-f(\tau,x_{k0}(\tau),u_j^{\epar},v_0(\tau))\Bigr\|+\Psi_{2kj}(\tau),
\end{multline}
where
\begin{multline*}
\Psi_{2kj}(\tau)\mydef\int\limits_{\tau'_{ki_\tau(j-1)}}^{\tau'_{ki_\tau j}}\frac{\mfv(\|v_k(s)-v_0(s)\|)}{\tau'_{ki_\tau j}-\tau'_{ki_\tau(j-1)}}ds\\
+2L_f(G)\|x_k(\cdot)-x_{k0}(\cdot)\|_{C(T;\RA^n)}+\mfv(\|v_k(\tau)-v_0(\tau)\|).
\end{multline*}

By definition \fref{dop_inst_k} of additional points $\tau'_{ki_\tau(j-1)}, \tau'_{ki_\tau j}$ we have 
\begin{gather*}
\max\{|\tau'_{ki_\tau(j-1)}-\tau|,|\tau'_{ki_\tau j}-\tau|\}=\tau-\tau'_{ki_\tau(j-1)}\le\diam{\Partk}+\epar\diamin{\Partk},\\
\tau'_{ki_\tau j}-\tau'_{ki_\tau(j-1)}=\frac{\epar\diamin{\Partk}}{n_\epar},\quad \tau\in{T},\ k\in\NA,\ j\in\nint{1}{n_{\epar}}.
\end{gather*}
Then, the condition \fref{thin-out-partk} implies the inequalities
$$
\frac{\max\{|\tau'_{ki_\tau(j-1)}-\tau|,|\tau'_{ki_\tau j}-\tau|\}}{\tau'_{ki_\tau j}-\tau'_{ki_\tau(j-1)}}\le\frac{n_\epar(\diam{\Partk}+\epar\diamin{\Partk})}{\epar\diamin{\Partk}}\le \frac{n_\epar(3+\epar)}{\epar} \mydef c_\epar\ge1/2,
$$
for all $\tau\in{T}$, $k\in\NA$, $j\in\nint{1}{n_{\epar}}$. So, the inclusions take place
$$
\tau'_{ki_\tau(j-1)},\tau'_{ki_\tau j}\in I_{c_\epar}(\tau),\quad \tau\in{T},\ k\in\NA,\ j\in\nint{1}{n_{\epar}}.
$$
Hence, by Lemma \ref{lem_local_disturb_approx_1}, we get the convergence
\beq\label{eq_lim_1}
\lim_{k\rightarrow\infty}\max_{j\in\nint{1}{n_{\epar}}}\bigg\|\int\limits_{\tau'_{ki_\tau(j-1)}}^{\tau'_{ki_\tau j}}\frac{f(s,x_{k0}(s),u_j^{\epar},v_0(s))}{\tau'_{ki_\tau j}-\tau'_{ki_\tau(j-1)}}ds-f(\tau,x_{k0}(\tau),u_j^{\epar},v_0(\tau))\bigg\|=0
\eeq
 for \aee $\tau\in{T}$. 

Applying Lemma \ref{teo_lebeg_app} to the integral item in $\Psi_{2kj}$ and taking into account the convergences \fref{xk_to_x0} and \fref{vk_to_v0}, one can verify the equalities
\beq\label{Psi2_to_0}
\lim_{k\to\infty}\Psi_{2kj}(\tau)=0,\quad\text{for \aee $\tau\in{T}$}.
\eeq

Relations \fref {Dkj_too_fvk}, \fref{eq_lim_1}, \fref{Psi2_to_0} lead to equalities
\beq\label{Dkj_to_fvk}
\lim_{k\to\infty}\max_{j\in\nint{1}{n_{\epar}}}\Bigl\|D_{kj}(\tau)-f(\tau,x_k(\tau),u_j^{\epar},v_k(\tau))\Bigr\|=0,\quad\text{for \aee $\tau\in{T}$},
\eeq
that, in its turn, with the convergences \fref {xk_to_x0}, \fref{vk_to_v0} and the uniform continuity of $f(\cdot)$ in $ G\times\PC\times\QC$, give the equalities
\beq\label{Dkj_to_fv0}
\lim_{k\to\infty}\max_{j\in\nint{1}{n_{\epar}}}\Bigl\|D_{kj}(\tau)-f(\tau,x_0(\tau),u_j^{\epar},v_0(\tau))\Bigr\|=0,\quad\text{for \aee $\tau\in{T}$}.
\eeq

3. The continuity of $f(\cdot)$ in the area $G\times\PC\times\QC$ and equicontinuity of the sequence $(x_k(\cdot))_{k\in\NA}$ implies the existence of a function $\psi(\cdot): (0,1)\mapsto(0,+\infty)$ of the form
$$%\beqnt
\psi(\delta)\mydef\sup_{u\in\PC,v\in\QC, k\in\NA\atop \tau,\tau'\in{T}, |\tau-\tau'|\le\delta}\|f(\tau',x_k(\tau'),u,v)-f(\tau,x_k(\tau),u,v)\|\le\psi(|\tau'-\tau|).
$$%\eeq
and such that $\lim_{\delta\to+0}\psi(\delta)=0$.
Hence, for any $k\in\NA$ and $\tau\in T$ we obtain the inequalities
\begin{multline}\label{Dkj_to_fbvk}
\max_{j\in\nint{1}{n_{\epar}}}\|D_{kj}(\tau)-f(\tau,x_k(\tau),u_j^{\epar},\bar v_k(\tau))\|\\
\le\max_{j\in\nint{1}{n_{\epar}}}\|D_{kj}(\tau)-f(\tau_{ki_\tau},x_k(\tau_{ki_\tau}),u_j^{\epar},\bar v_k(\tau_{ki_\tau})))\|+\psi(|\tau-\tau_{ki_\tau}|)\\
\le\min_{v\in\QC}\max_{j\in\nint{1}{n_{\epar}}}\|D_{kj}(\tau)-f(\tau_{ki_\tau},x_k(\tau_{ki_\tau}),u_j^{\epar}, v))\|+\psi(|\tau-\tau_{ki_\tau}|)\\
\le\min_{v\in\QC}\max_{j\in\nint{1}{n_{\epar}}}\|D_{kj}(\tau)-f(\tau,x_k(\tau),u_j^{\epar}, v))\|+2\psi(|\tau-\tau_{ki_\tau}|)\\
\le\max_{j\in\nint{1}{n_{\epar}}}\|D_{kj}(\tau)-f(\tau,x_k(\tau),u_j^{\epar},v_k(\tau)))\|+2\psi(|\tau-\tau_{ki_\tau}|).
\end{multline}
The first inequality is also based on the identity $\bar v_k(\tau)=\bar v_k(\tau_{ki_\tau})$, $\tau\in T$, $k\in\NA$ (see \fref{sbs_y_bu_bv_inst}).

From \fref{Dkj_to_fvk}, \fref {Dkj_to_fv0}, \fref{Dkj_to_fbvk} and the convergences \fref{xk_to_x0}, \fref{vk_to_v0} the equalities follow:
\beq\label{fv0_to_fbvk}
\lim_{k\to\infty}\max_{j\in\nint{1}{n_{\epar}}}\|f(\tau,x_0(\tau),u_j^{\epar},v_0(\tau))-f(\tau,x_0(\tau),u_j^{\epar},\bar v_k(\tau))\|=0,\quad\text{for \aee $\tau\in{T}$}.
\eeq

From \fref{fv0_to_fbvk} we obtain the desired relation \fref{bvk_to_qtaue}:
suppose, that for some $\tau\in{T}$ the equality \fref{fv0_to_fbvk} holds, and \fref{bvk_to_qtaue} is not true. Then there exists a sequence $(\bar v_{k_l}(\tau))_{l\in\NA}$ such that
\beq\label{bv_notin_qtau}
\lim_{l\to\infty}\bar v_{k_l}(\tau)=\bar v\notin q_\tau^\epar.
\eeq
Then from the continuity of $f(\cdot)$ and (\ref{fv0_to_fbvk}) we get for all $j\in\nint{1}{n_{\epar}}$ the equality
$$
f(\tau,x_0(\tau),u_j^{\epar},v_0(\tau))=f(\tau,x_0(\tau),u_j^{\epar},\bar v).
$$
The latter imply the relations $\bar v\mysim{\tau x_0(\tau) u^\epar_j} v_0(\tau)$, $ j\in\nint{1}{n_{\epar}}$, that in conjunction are equivalent to the inclusion $\bar v\in q_\tau^\epar$  (see \fref{qe_eq_c_quej}). The last relation contradicts \fref{bv_notin_qtau}. The equality \fref{bvk_to_qtaue} is established.
\end{proof}

\begin{lemma}\label{lem_uniform_disturb_approx_incl}
The strategies $(\Ue)_{\epar>0}$, defined by \emph{\fref{u_0_bar_v_0}--\fref{Ulpi}}, satisfy the equality
\beq\label{XcUe_to_Wq}
\limsup_{\epar\to0}\dist{C({T};\RA^n)}(\Xc(z_0,\Ue),\Wq(z_0))=0, \quad z_0 \in G_0.
\eeq
\end{lemma}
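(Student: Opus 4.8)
The plan is to bound, for each fixed $\epar\in(0,1)$ and each $x_0(\cdot)\in\Xc(z_0,\Ue)$, the quantity $\inf_{w(\cdot)\in\Wq(z_0)}\|x_0(\cdot)-w(\cdot)\|_C$ by some $\rho(\epar)$ that does not depend on $x_0(\cdot)$ and satisfies $\rho(\epar)\to0$ as $\epar\to0$; since $\dist{C({T};\RA^n)}(\Xc(z_0,\Ue),\Wq(z_0))$ is the supremum of this quantity over $x_0(\cdot)$, relation \fref{XcUe_to_Wq} follows on passing to $\limsup_{\epar\to0}$. I keep the data fixed above: the approximating sequence $x_k(\cdot)\to x_0(\cdot)$ (see \fref{xk_to_x0}), the surrogate disturbances $\bar v_k(\cdot)$, the limit disturbance $v_0(\cdot)$, and the associated $y$-model motions $y_k(\cdot)$ from \fref{sbs_y_bu_bv_inst}. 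The two limits are taken in order: first $k\to\infty$ (so that $\diam{\Partk}\to0$), then $\epar\to0$.

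First I would show $\inf_{w(\cdot)\in\Wq(z_0)}\|y_k(\cdot)-w(\cdot)\|_C\to0$ as $k\to\infty$. This is the Krasovskii extremal-shift estimate. By \fref{mov_y_def} the $y$-model evolves on each $[\tau_{k(i-1)},\tau_{ki}]$ under the constant useful control $u_{k(i-1)}$ and the constant admissible disturbance $\bar v_{k(i-1)}\in\QC$, while \fref{U_bPart_def} selects $u_{ki}$ as the extremal shift of $y_k(\tau_{ki})$ towards its projection $w(\tau_{ki}\mid\tau_{ki},y_k(\cdot))$ onto the target set $\Wq(z_0)|_{[t_0,\tau_{ki}]}$. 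Since $\Wq(z_0)$ is compact and $u$-stable (Lemma~\ref{lem_z_pr_na_u_stab}), the standard companion-trajectory construction produces $w_k(\cdot)\in\Wq(z_0)$ with $\|y_k(\cdot)-w_k(\cdot)\|_C$ controlled by a modulus of $\diam{\Partk}$; hence $\inf_{w}\|y_k-w\|_C\le\|y_k-w_k\|_C\to0$.

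Next I would prove $\limsup_{k\to\infty}\|x_k(\cdot)-y_k(\cdot)\|_C\le\rho_1(\epar)$ with $\rho_1(\epar)\to0$. Both motions are driven by the same useful controls $u_{ki}$ on the major parts of the partition intervals and differ only on the test sub-intervals $[\tau'_{k(i+1)(j-1)},\tau'_{k(i+1)j})$, whose total length is an $\epar$-fraction of $|T|$; by \fref{kappa_G} this contributes $O(\epar)$ to the velocity discrepancy. On the major parts the discrepancy reduces to comparing $f(\cdot,\cdot,u_{ki},v_k(\cdot))$ for $x_k$ with $f(\cdot,\cdot,u_{ki},\bar v_{ki})$ for $y_k$. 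Choosing a net point $u^\epar_j$ with $\|u_{ki}-u^\epar_j\|\le\epar$ splits this into two control-increments, each of size $\mfu(\epar)$ (where $\mfu$ is the modulus of continuity of $f$ in its control argument), and a middle term $f(\tau,x_0(\tau),u^\epar_j,\bar v_k(\tau))-f(\tau,x_0(\tau),u^\epar_j,v_0(\tau))$, which tends to $0$ as $k\to\infty$ for a.a.\ $\tau$ by \fref{fv0_to_fbvk} (equivalently, by Lemma~\ref{lem_bvk_to_qtaue} and \fref{qe_eq_c_quej}) and, being bounded in virtue of \fref{kappa_G}, integrates to $0$ by dominated convergence. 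The remaining gap between the frozen left-endpoint velocity of the $y$-model and the integrated velocity of $x_k$ is absorbed using the local-averaging Lemma~\ref{lem_local_disturb_approx_1} together with \fref{xk_to_x0} and \fref{vk_to_v0}. The state-dependent terms $L_f(G)\|x_k-y_k\|$ feed into Gronwall's inequality, whose forcing is $O(\mfu(\epar)+\epar)+o(1)$; this yields the claimed bound with $\rho_1(\epar)=O(\mfu(\epar)+\epar)$.

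Finally I would combine the estimates. For each $k$, compactness of $\Wq(z_0)$ gives
\[
\inf_{w(\cdot)\in\Wq(z_0)}\|x_0-w\|_C\le\|x_0-x_k\|_C+\|x_k-y_k\|_C+\inf_{w(\cdot)\in\Wq(z_0)}\|y_k-w\|_C;
\]
letting $k\to\infty$ and using \fref{xk_to_x0} and the two preceding steps gives $\inf_{w}\|x_0-w\|_C\le\rho_1(\epar)$. As $x_0(\cdot)\in\Xc(z_0,\Ue)$ was arbitrary, $\dist{C({T};\RA^n)}(\Xc(z_0,\Ue),\Wq(z_0))\le\rho_1(\epar)$, and $\limsup_{\epar\to0}$ yields \fref{XcUe_to_Wq}. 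The main obstacle is the third paragraph: transferring the surrogate's fidelity from the test controls (where Lemma~\ref{lem_bvk_to_qtaue} applies) and from the short test sub-intervals on which it is identified to the useful control over the major part of the next interval, uniformly along $\Partk$, so that a single Gronwall estimate closes with a residual controlled by $\epar$ alone.
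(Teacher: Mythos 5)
Your proposal follows essentially the same route as the paper: the extremal-shift/$u$-stability argument placing $y_k(\cdot)$ near $\Wq$, the decomposition of the velocity discrepancy between $x_k$ and $y_k$ into the test-interval contribution of measure $O(\epar)$, the $\epar$-net substitution costing $\mfu(\epar)$, and the surrogate-versus-$v_0$ term handled via Lemma~\ref{lem_bvk_to_qtaue} and dominated convergence, all closed by Gronwall and the triangle inequality with the limits taken first in $k$ and then in $\epar$. The only cosmetic difference is your invocation of Lemma~\ref{lem_local_disturb_approx_1} in the $x_k$-versus-$y_k$ comparison, which the paper needs only inside the proof of Lemma~\ref{lem_bvk_to_qtaue} since the $y$-model state in \fref{mov_y_def} is not frozen and its deviation is absorbed by the Lipschitz term.
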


\begin{proof}

1. The equality
\beq\label{yk_to_w}
\lim_{k\rightarrow\infty}\dist{C(T,\RA^n)}(\{y_k(\cdot)\},\Wq(x_{k}(t_0)))=0
\eeq
holds. The proof of \fref{yk_to_w} is based on the definition of $\bar u_k(\cdot)$, on the properties of the sets $\Wq(\cdot)$ (Lemma \ref{lem_z_pr_na_u_stab}) and follows the scheme of the proof of \cite[Theorem 11.3.1]{KraSub88e}.

2. Let estimate the difference $ y_k(\tau)-x_k(\tau)$ for $\tau\in{T}$:
\begin{multline*}%\label{yk_to_xk}
y_k(\tau)-x_k(\tau)=\int\limits_{t_0}^\tau[f(s,y_k(s),\bar u_k(s),\bar v_k(s))-f(s,x_k(s),u_k(s),v_k(s))]\,ds\\
=\int\limits_{t_0}^\tau[f(s,y_k(s),\bar u_k(s),\bar v_k(s))-f(s,x_k(s),\bar u_k(s),\bar v_k(s))]\,ds\\
+\int\limits_{t_0}^\tau[f(s,x_k(s),\bar u_k(s),\bar v_k(s))-f(s,x_k(s),u_k(s),v_k(s))]\,ds.
\end{multline*}
We use the Lipschitz property of right--hand side of the equation \fref{sys} (continuing evaluation):
\begin{multline*}%\label{yk_to_xk}
\le\int\limits_{t_0}^\tau L_f(G)\|y_k(s)-x_k(s)\|ds\\
+\int\limits_{t_0}^\tau\|f(s,x_k(s),\bar u_k(s),\bar v_k(s))-f(s,x_k(s),u_k(s),v_k(s))\|\,ds
\end{multline*}
(here $L_f(G)$ --- the Lipschitz constant of the right--hand side $f(\cdot)$ of the system (\ref{sys}) for second variable in the area $G$). Represent the second integral as sum of two integrals, using the set
$$
M_{\epar}\mydef\bigcup_{i\in\nint{1}{(\nPartk-1)}}[\tau'_{ki},\tau_{ki})
$$
and the identity $u_k(s)=\bar u_k (s)$, $s\in T\setminus M_{\epar}$, $k\in\NA$ (continuing evaluation):
\begin{multline*}%\label{yk_to_xk}
\le\int\limits_{t_0}^\tau L_f(G)\|y_k(s)-x_k(s)\|ds\\
+\int\limits_{[t_0,\tau]\setminus M_{\epar}}\|f(s,x_k(s),u_k(s),\bar v_k(s))-f(s,x_k(s),u_k(s),v_k(s))\|\,ds\\
+\int\limits_{M_{\epar}}\|f(s,x_k(s),\bar u_k(s),\bar v_k(s))-f(s,x_k(s),u_k(s),v_k(s))\|\,ds.
\end{multline*}

We use the continuity of $f(\cdot)$ in the last variable for the second integral and the majorant $\varkappa(G)$ (see \fref{kappa_G}) --- for the third integral (continuing evaluation):
\begin{multline*}%\label{yk_to_xk}
\le\int\limits_{t_0}^\tau L_f(G)\|y_k(s)-x_k(s)\|ds\\
+\int\limits_{[t_0,\tau]\setminus M_{\epar}}\|f(s,x_k(s),u_k(s),\bar v_k(s))-f(s,x_k(s),u_k(s),v_0(s))\|\,ds\\
+\int\limits_{[t_0,\tau]\setminus M_{\epar}}\mfv(\|v_0(s)-v_k(s)\|)\,ds+2\varkappa(G)\mes(M_{\epar}).
\end{multline*}
In the second integral we use the continuity of $f(\cdot)$ in the third variable (continuing evaluation):
\begin{multline*}%\label{yk_to_xk}
\le\int\limits_{t_0}^\tau L_f(G)\|y_k(s)-x_k(s)\|ds\\
+\int\limits_{[t_0,\tau]\setminus M_{\epar}}\|f(s,x_k(s),u_k^\epar(s),\bar v_k(s))-f(s,x_k(s),u_k^\epar(s),v_0(s))\|\,ds\\
+\int\limits_{[t_0,\tau]\setminus M_{\epar}}\mfv(\|v_0(s)-v_k(s)\|)\,ds+2(\vartheta-t_0)\mfu(\epar)+2\varkappa(G)\mes(M_{\epar}),\\
\le\int\limits_{t_0}^\tau L_f(G)\|y_k(s)-x_k(s)\|ds+\int\limits_{[t_0,\tau]\setminus M_{\epar}}\mfv\left(\dist{\RA^q}\left(\{\bar v_k(s)\},q^{u^\epar_k(s)}_s\right)\right)\,ds\\
+\int\limits_{[t_0,\tau]\setminus M_{\epar}}\mfv(\|v_0(s)-v_k(s)\|)\,ds+2(\vartheta-t_0)\mfu(\epar)+2\varkappa(G)\mes(M_{\epar}),
\end{multline*}
where $u_k^\epar(s)\in\argmin_{j\in\nint{1}{n_\epar}} \|u_j^\epar-u_k(s)\|$; note that, by definition of $\epar$--net, the inequality $\|u_k^\epar(s)-u_k(s)\|\le\epar $ holds; $\mfu(\cdot)$ --- modulus of continuity of $f(\cdot)$ to the third argument:
$$%\beqnt%\label{mfu}
\mfu(\delta)\mydef\max_{|u-u'|\le\delta\atop{(\tau,x)\in G\atop v\in\QC, u,u'\in\PC}}\|f(\tau,x,u,v)-f(\tau,x,u',v)\|,\quad\lim_{\delta\rightarrow+0}\mfu(\delta)=0.
$$%\eeq

Using the inequality $\mes(M_{\epar})\le\epar(\vartheta-t_0)$ and the definition of the set $q^\epar_s$ we get 
\beq\label{yk_to_xk_prel}
\|y_k(\tau)-x_k(\tau)\|\le\int\limits_{t_0}^\tau L_f(G)\|y_k(s)-x_k(s)\|ds+\Psi_{1k},
\eeq%nd{multline*}
where
$$
\Psi_{1k}\mydef\int\limits_{T}\big[\mfv\bigl(\dist{\RA^q}(\{\bar v_k(s)\},q_s^\epar)\bigr)+\mfv(\|v_k(s)-v_0(s)\|)\big]\,ds+2(\vartheta-t_0)\big(\mfu(\epar)+\varkappa(G)\epar\big).
$$
We apply to \fref{yk_to_xk_prel} the Gronwall lemma (see \cite[Theorem II.4.4]{Wargae}):
\beq\label{xk_yk_fin_est}
\|y_k(\tau)-x_k(\tau)\|\le\Psi_{1k}\big(1+(\vartheta-t_0)L_f(G)\exp((\vartheta-t_0)L_f(G))\big).
\eeq

4. Lemma \ref{lem_bvk_to_qtaue}, the convergences \fref{vk_to_v0}, \fref{yk_to_w} and the inequality \fref{xk_yk_fin_est} imply the estimate
\begin{multline*}
\dist{C(T,\RA^n)}(\{x_0(\cdot)\},\Wq(z_0))\\
\le2(\vartheta-t_0)\big[1+(\vartheta-t_0)L_f(G)\exp\big((\vartheta-t_0)L_f(G)\big)\big]\big(\mfu(\epar)+\varkappa(G)\epar\big),
\end{multline*}
that, in view of the choice of $ x_0(\cdot)\in\Xc(z_0,\Ue)$, leads to the equality \fref{XcUe_to_Wq}.
\end{proof}

\subsection{The proof of Theorem \ref{teo_Q_EQ_C}}

By  Lemma \ref{lem_uniform_disturb_approx_incl} the equalities \fref{XcUe_to_Wq} hold. These equalities and equality \fref{z_pr_na_gives_opt_guarantee} of Lemma \ref{lem_z_pr_na_u_stab} imply the inequality \fref{Ue_approx_gamma_ps}:
$$
\limsup_{\epar\to0}\Gc(z_0,\Ue)=\limsup_{\epar\to0}\max_{x(\cdot)\in\Xc(z_0,\Ue)}\qndx(x(\cdot))\le\max_{x(\cdot)\in \Wq(z_0)}\qndx(x(\cdot))=\Gq(z_0).
$$

%%%%%%%%%%%%%%%%%%%%%%%%%%%%%%%%%%%%%%%%%%%%%%%%%%%%%%%%%%%
\section*{Acknowledgments}
This work was supported by the Russian Foundation for Basic Research (project no.  12-01-00290),  by the Program for Fundamental Research of Presidium of the Russian Academy of Sciences ``Dynamic Systems and Control Theory'', by the Ural Branch of the Russian Academy of Sciences (project no. 12-$\Pi$-1-1002).

%\theoremstyle{plain}
%\bibliographystyle{unsrt}
%\bibliography{D:/Dropbox/CURRENT/ref/mybib,D:/Dropbox/CURRENT/ref/allbib}

\end{document}